\numberwithin{equation}{section}
\theoremstyle{plain}
\newtheorem{thm}{Theorem}[section]
\newtheorem{lemma}[thm]{Lemma}
\newtheorem{prop}[thm]{Proposition}
\theoremstyle{definition}
\newtheorem{defn}[thm]{Definition}
\newtheorem{remark}[thm]{Remark}
\begin{document}

\title{On Exponentiation and Infinitesimal One-Parameter Subgroups of Reductive Groups}

\author[Paul Sobaje]
{Paul Sobaje}

\begin{abstract}

Let $G$ be a reductive algebraic group over an algebraically closed field $k$ of characteristic $p>0$, and assume $p$ is good for $G$.  Let $P$ be a parabolic subgroup with unipotent radical $U$.  For $r \ge 1$, denote by $\mathbb{G}_{a(r)}$ the $r$-th Frobenius kernel of $\mathbb{G}_a$.  We prove that if the nilpotence class of $U$ is less than $p$, then any embedding of $\mathbb{G}_{a(r)}$ in $U$ lies inside a one-parameter subgroup of $U$, and there is a canonical way in which to choose such a subgroup.  Applying this result, we prove that if $p$ is at least as big as the Coxeter number of $G$, then the cohomological variety of $G_{(r)}$ is homeomorphic to the variety of $r$-tuples of commuting elements in $\mathcal{N}_1(\mathfrak{g})$, the $[p]$-nilpotent cone of $Lie(G)$.
\end{abstract}

\maketitle

\section{Introduction and Statement of Results}

The motivation behind the results in this paper comes from a few different sources.  To begin, let $G$ be a reductive algebraic group over an algebraically closed field $k$ of characteristic $p>0$, and assume further that the prime $p$ is \textit{good} for the root system of $G$.  Let $P$ be a parabolic subgroup of $G$ having unipotent radical $U$, and denote by $\mathfrak{g},\mathfrak{p}$ and $\mathfrak{u}$ the Lie algebras of these various groups.  Such Lie algebras are equipped with a restriction map $x \mapsto x^{[p]}$, and we denote by $\mathcal{N}(\mathfrak{g})$ (resp. $\mathcal{N}_1(\mathfrak{g})$) the nullcone (resp. $[p]$-nilpotent cone) of $\mathfrak{g}$.

In \cite[\S 2]{Ser} J.-P. Serre gave an argument, the extent of which was more fully presented by G. Seitz in \cite[\S 5]{Sei}, establishing that if the nilpotence class of $U$ is less than $p$, then there exists an exponential map, $\text{exp}: \mathfrak{u} \rightarrow U$, constructed by taking the exponential map for the corresponding Lie algebra and algebraic group over $\mathbb{Q}$, proving that the map is defined over $\mathbb{Z}_{(p)}$, and then base-changing to $k$.  It is an isomorphism of algebraic groups, the group structure on $\mathfrak{u}$ coming from the Hausdorff formula, and was further proved to be the unique isomorphism which has tangent map equal to the identity on $\mathfrak{u}$, and which is also compatible with the conjugation action of $P$ \cite[Proposition 5.3]{Sei}.  J. Carlson, Z. Lin, and D. Nakano further showed in \cite{CLN} that if $G$ is simple, and if $\mathcal{N}_1(\mathfrak{g})$ is a normal variety, then this exponential map can be extended to a $G$-equivariant isomorphism between $\mathcal{N}_1(\mathfrak{g})$ and $\mathcal{U}_1(G)$, the $p$-unipotent subvariety of $G(k)$.  In particular if $p \ge h$, the Coxeter number of $G$, then this extension exists (a fact independently observed by G. McNinch in \cite[Remark 39]{M2}).

We denote by $\mathbb{G}_{a(r)}$ the $r$-th Frobenius kernel of the additive group $\mathbb{G}_a$.  An \textit{infinitesimal one-parameter subgroup} of $G$ is a homomorphism of affine group schemes over $k$, $\varphi: \mathbb{G}_{a(r)} \rightarrow G$, for some $r \ge 1$.  Following \cite{SFB1} we will refer to the set of morphisms $\textup{Hom}_{gs/k}(\mathbb{G}_{a(r)},G)$ as the variety of infinitesimal one-parameter subgroups of $G$ of height $\le r$.  That this set has the structure of an affine variety is established in \cite[Theorem 1.5]{SFB1}.  In the case $r=1$, one can show that there is a natural isomorphism

\begin{equation}\label{iso}
\textup{Hom}_{gs/k}(\mathbb{G}_{a(1)},G) \cong \mathcal{N}_1(\mathfrak{g}).
\end{equation}
\vspace{0.02in}

\noindent Suppose now that $U$ is as above.  If $x \in \mathfrak{u} = \mathcal{N}_1(\mathfrak{u})$ (\cite[4.4]{M}), let $\varphi_x$ be the corresponding element in $\textup{Hom}_{gs/k}(\mathbb{G}_{a(1)},U)$ given by (\ref{iso}).  The exponential map constructed in \cite[Proposition 5.3]{Sei} produces a one-parameter subgroup 

\vspace{0.03in}
\begin{center}$\text{exp}_x: \mathbb{G}_a \rightarrow G, \quad s \mapsto \text{exp}(sx)$.\end{center}
\vspace{0.03in}

\noindent This one-parameter subgroup, when restricted to $\mathbb{G}_{a(1)} \le \mathbb{G}_{a}$, will equal the morphism $\varphi_x$.  Thus, the existence of the exponential map on $\mathfrak{u}$ can be used to prove that every height 1 infinitesimal one-parameter subgroup of $U$ lies inside a one-parameter subgroup of $U$, while the uniqueness of the exponential provides a canonical, or preferred, such subgroup for each $x \in \mathfrak{u}$.  Stated another way, exponentiation on $\mathfrak{u}$ gives a canonically defined map of sets 

\vspace{0.03in}
\begin{center}$\text{exp}_U^1: \textup{Hom}_{gs/k}(\mathbb{G}_{a(1)},U) \rightarrow \textup{Hom}_{gs/k}(\mathbb{G}_a,U)$\end{center}
\vspace{0.03in}

\noindent with certain desirable properties.

\bigskip
In this paper, our first objective is to generalize this notion to arbitrary Frobenius kernels of $\mathbb{G}_a$.  Specifically, we prove:

\begin{thm}\label{first}
Let $G$ be a reductive group, and assume that $p$ is good for $G$.  Let $P$ be a parabolic subgroup of $G$ with unipotent radical $U$, and assume that the nilpotence class of $U$ is less than $p$.  Then, for each $r \ge 1$, there is a canonically defined map

\vspace{0.05in}
\begin{center}$\textup{exp}^r_U: \textup{Hom}_{gs/k}(\mathbb{G}_{a(r)},U) \rightarrow \textup{Hom}_{gs/k}(\mathbb{G}_a,U)$,\end{center}
\vspace{0.05in}

\noindent such that for all $\varphi, \phi \in \textup{Hom}_{gs/k}(\mathbb{G}_{a(r)},U)$:

\begin{enumerate}

\item $\textup{exp}^r_U(\varphi){\mid}_{\mathbb{G}_{a(r)}} = \varphi$.

\item If $\sigma$ is an automorphism of $P$, then $\textup{exp}^r_U(\sigma \, \circ \, \varphi) = \sigma \circ \textup{exp}^r_U(\varphi)$.  In particular, $\textup{exp}^r_U$ is compatible with the conjugation action of $P$.

\item $[\varphi(a), \phi(b)] = 1$ for all $a,b \in \mathbb{G}_{a(r)}(A)$ and all commutative $k$-algebras $A$, if and only if $[\textup{exp}^r_U(\varphi)(c), \textup{exp}^r_U(\phi)(d)] = 1$ for all $c,d \in \mathbb{G}_a(A)$, and all $A$.

\end{enumerate}

\end{thm}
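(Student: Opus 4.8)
The plan is to transport the entire problem across the Serre--Seitz exponential isomorphism $\mathrm{exp}\colon\mathfrak{u}\xrightarrow{\,\sim\,}U$, under which $U$ becomes the group $(\mathfrak{u},\ast)$ with $\ast$ the Hausdorff product, and to classify there, completely explicitly, the morphisms out of $\mathbb{G}_{a(r)}$ and $\mathbb{G}_a$. The key point is that $\mathbb{G}_{a(r)}$ and $\mathbb{G}_a$ are commutative, so any homomorphism into $U$ has commutative image; and in a nilpotent group of class less than $p$ presented as $(\mathfrak{u},\ast)$, two elements $X,Y$ satisfy $X\ast Y=Y\ast X$ if and only if $[X,Y]=0$. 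This last fact follows from a short induction along the lower central series of $\mathfrak{u}$, using that in the free Lie algebra $\mathrm{BCH}(X,Y)-\mathrm{BCH}(Y,X)$ equals $[X,Y]$ plus a sum of iterated brackets lying in the ideal generated by $[X,Y]$, so that $X\ast Y=Y\ast X$ forces $[X,Y]\in\mathfrak{u}_m$ for all $m$. Granting it, if $\psi\colon\mathbb{G}_{a(r)}\to U$ is a homomorphism and $\widetilde\psi:=\mathrm{exp}^{-1}\circ\psi$, then $\widetilde\psi$ is a homomorphism for $\ast$ with commutative image, so $\widetilde\psi(a)\ast\widetilde\psi(b)=\widetilde\psi(a)+\widetilde\psi(b)$; combined with $\widetilde\psi(a)\ast\widetilde\psi(b)=\widetilde\psi(a+b)$ this shows $\widetilde\psi$ is a homomorphism into the \emph{vector} group $\mathfrak{u}$, hence $\widetilde\psi(a)=\sum_{i=0}^{r-1}a^{p^i}x_i$ for a unique tuple $(x_0,\dots,x_{r-1})\in\mathfrak{u}^{\,r}$. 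The further requirement that $\mathrm{exp}\circ\widetilde\psi$ actually land in $(\mathfrak{u},\ast)$ — i.e. that $\widetilde\psi(a)$, $\widetilde\psi(b)$ commute for all $a,b$ — translates, after expanding $[\widetilde\psi(a),\widetilde\psi(b)]=0$ and using that the monomials $a^{p^i}b^{p^j}$ with $0\le i,j<r$ are linearly independent in $k[a,b]/(a^{p^r},b^{p^r})$, into the condition $[x_i,x_j]=0$ for all $i,j$.

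The identical analysis, now permitting arbitrarily high $p$-power exponents, identifies $\textup{Hom}_{gs/k}(\mathbb{G}_a,U)$ with the set of pairwise-commuting, eventually-zero tuples $(x_0,x_1,\dots)$ in $\mathfrak{u}$. With this dictionary in hand I would define $\mathrm{exp}^r_U$ by sending the morphism $\varphi$ carrying the data $(x_0,\dots,x_{r-1})$ to the morphism $\mathbb{G}_a\to U$ given by $a\mapsto\mathrm{exp}\!\big(\textstyle\sum_{i=0}^{r-1}a^{p^i}x_i\big)=\mathrm{exp}(ax_0)\,\mathrm{exp}(a^px_1)\cdots\mathrm{exp}(a^{p^{r-1}}x_{r-1})$, that is, by keeping the same tuple (padded with zeros) and reading it as data for a one-parameter subgroup. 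Since the tuple attached to $\varphi$ depends only on $\varphi$ and on the exponential map, and the latter is itself canonical by the uniqueness in \cite[Prop.~5.3]{Sei}, the map $\mathrm{exp}^r_U$ is canonical. Property (1) is then immediate: restricting $a\mapsto\mathrm{exp}(\sum a^{p^i}x_i)$ to $\mathbb{G}_{a(r)}$, where $a^{p^r}=0$, returns exactly $\varphi$.

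For property (2) I would first upgrade the $P$-equivariance of $\mathrm{exp}$ to equivariance under all of $\textup{Aut}(P)$. If $\sigma\in\textup{Aut}(P)$, then $\sigma$ preserves the characteristic subgroup $U$, hence induces an automorphism $d\sigma$ of $\mathfrak{u}$ and so of $(\mathfrak{u},\ast)$; and $\sigma\circ\mathrm{exp}\circ(d\sigma)^{-1}$ is again an isomorphism $(\mathfrak{u},\ast)\to U$ whose differential is the identity and which is compatible with conjugation by $P$ — this uses $\sigma\circ\textup{int}_h=\textup{int}_{\sigma(h)}\circ\sigma$ and $d\sigma\circ\textup{Ad}(h)=\textup{Ad}(\sigma(h))\circ d\sigma$. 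By the uniqueness in \cite[Prop.~5.3]{Sei} it therefore equals $\mathrm{exp}$, i.e. $\sigma\circ\mathrm{exp}=\mathrm{exp}\circ d\sigma$. Consequently the tuple of $\sigma\circ\varphi$ is $(d\sigma(x_0),\dots,d\sigma(x_{r-1}))$, so $\mathrm{exp}^r_U(\sigma\circ\varphi)(a)=\mathrm{exp}\!\big(\sum a^{p^i}d\sigma(x_i)\big)=\sigma\!\big(\mathrm{exp}(\sum a^{p^i}x_i)\big)=\sigma(\mathrm{exp}^r_U(\varphi)(a))$; specialising $\sigma$ to conjugation by an element of $P$ gives the stated $P$-compatibility. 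For property (3), let $(x_i)$ and $(y_j)$ be the tuples of $\varphi$ and $\phi$. By the ``$\ast$-commuting $\Leftrightarrow$ bracket $=0$'' fact, $[\varphi(a),\phi(b)]=1$ for all $a,b$ and all $A$ is equivalent to $\big[\sum a^{p^i}x_i,\ \sum b^{p^j}y_j\big]=0$ universally, hence, by linear independence of the relevant monomials, to $[x_i,y_j]=0$ for all $0\le i,j<r$; and $[\mathrm{exp}^r_U(\varphi)(c),\mathrm{exp}^r_U(\phi)(d)]=1$ for all $c,d\in\mathbb{G}_a(A)$ is equivalent, by the same computation with $c,d$ now unrestricted, to that same system of equations. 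This establishes (3).

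The step I expect to be the real obstacle is the classification underlying the dictionary: proving that a homomorphism out of the commutative group scheme $\mathbb{G}_{a(r)}$ into $(\mathfrak{u},\ast)$ is automatically additive. This rests on the Hausdorff-commutator lemma above and is precisely where the hypothesis that $U$ has nilpotence class less than $p$ enters (it is also what makes $\ast$ well defined in the first place). The surrounding bookkeeping — parametrising additive morphisms by tuples, tracking the Frobenius twists, and isolating each $[x_i,y_j]$ from the universal vanishing of a bracket via linear independence of the monomials $a^{p^i}b^{p^j}$ — is routine but has to be carried out with some care.
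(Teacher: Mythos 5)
Your proposal is correct, but it runs in the opposite logical direction from the paper. The paper never invokes the Baker--Campbell--Hausdorff structure on $\mathfrak{u}$ for this theorem: it works purely with comorphisms, lifting $\varphi^{*}\colon k[U]\to k[t]/(t^{p^r})$ to $k[t]$ by taking the minimal-degree representatives on the weight-vector generators $Y_{\alpha}$, and then uses the $T$-weight grading together with the degree bound of Lemma \ref{degree} (the image of $k[U]_{\lambda}$ has degree at most $|ht_J(\lambda)|p^{r-1}$) to show the lift is a Hopf algebra map; properties (2) and (3) then follow from the intrinsic characterization of the lift on the subspace $k[U]_{<p}$. You instead first classify $\textup{Hom}_{gs/k}(\mathbb{G}_{a(r)},U)\cong C_r(\mathfrak{u})$ by transporting through the Serre--Seitz exponential, and define $\textup{exp}^r_U$ by zero-padding the tuple --- which is essentially the paper's Theorem \ref{description}, there deduced as a \emph{consequence} of Theorem \ref{first} (the surjectivity of restriction is exactly what the paper's lifting argument supplies). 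The one step you must therefore furnish independently is that every homomorphism $\mathbb{G}_{a(r)}\to U$ comes from a commuting tuple, and your argument for it --- a homomorphism out of a commutative group scheme into $(\mathfrak{u},\ast)$ has pairwise Lie-commuting image by the Hausdorff-commutator lemma, hence is additive, hence given by a $p$-polynomial --- is sound; it is in the spirit of the ``exponential type'' arguments of Suslin--Friedlander--Bendel and McNinch. What your route buys is the explicit parametrization up front, which makes (1)--(3) transparent; what it costs is dependence on the integrality of the truncated Hausdorff series and on the uniqueness clause of Seitz's Proposition 5.3 (which you use correctly to get $\sigma\circ\textup{exp}=\textup{exp}\circ\, d\sigma$ for all automorphisms of $P$). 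The paper's Hopf-algebraic route avoids reduction mod $p$ entirely, and its key object $k[U]_{<p}$ is what later makes the independence statements and the extension to $G$-orbits in Theorem \ref{expglobal} go through; you would need to re-derive the $G$-orbit compatibility separately from your parametrization.
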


The proof of this theorem does not involve reduction mod $p$, but rather analyzes Hopf algebra homomorphisms between the coordinate algebras of the relevant affine group schemes.  Nonetheless, we show that when $r=1$ this result agrees with the extension of height 1 infinitesimal one-parameter subgroups coming from the work of \cite[5.3]{Sei}.

We next show that the method of proof for Theorem \ref{first} allows for a fairly easy extension to a $G$-orbit.  Recall that the conjugation action of $G$ on itself induces an action of $G$ on $\textup{Hom}_{gs/k}(\mathbb{G}_{a(r)},G)$, of which $\textup{Hom}_{gs/k}(\mathbb{G}_{a(r)},U)$ is a subset.

\begin{thm}\label{second}
Let $p$ be good for $G$, and let $P$ be a parabolic subgroup of $G$ with unipotent radical $U$ having nilpotence class less than $p$.  For each $r \ge 1$, the map $\textup{exp}^r_U$ extends to a map

\vspace{0.05in}
\begin{center}$\textup{exp}^r: G \cdot \textup{Hom}_{gs/k}(\mathbb{G}_{a(r)},U) \rightarrow \textup{Hom}_{gs/k}(\mathbb{G}_a,G)$,\end{center}
\vspace{0.05in}

\noindent given by $\textup{exp}^r(g \cdot \varphi) := g \cdot \textup{exp}^r_U(\varphi)$, \, for all $g \in G(k), \varphi \in \textup{Hom}_{gs/k}(\mathbb{G}_{a(r)},U)$.
\end{thm}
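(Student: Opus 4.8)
The plan is to show that the assignment $g \cdot \varphi \mapsto g \cdot \textup{exp}^r_U(\varphi)$ is \emph{well-defined} on the $G$-orbit $G \cdot \textup{Hom}_{gs/k}(\mathbb{G}_{a(r)},U)$; once well-definedness is established, there is essentially nothing more to prove, since the formula manifestly produces an element of $\textup{Hom}_{gs/k}(\mathbb{G}_a,G)$. The only issue is that a given $\psi \in G \cdot \textup{Hom}_{gs/k}(\mathbb{G}_{a(r)},U)$ may be written as $g \cdot \varphi$ in more than one way with $\varphi \in \textup{Hom}_{gs/k}(\mathbb{G}_{a(r)},U)$ and $g \in G(k)$, and we must check that $g \cdot \textup{exp}^r_U(\varphi)$ does not depend on the choice. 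So suppose $g_1 \cdot \varphi_1 = g_2 \cdot \varphi_2$ with $\varphi_1,\varphi_2 \in \textup{Hom}_{gs/k}(\mathbb{G}_{a(r)},U)$. Setting $h = g_2^{-1} g_1 \in G(k)$, this says $h \cdot \varphi_1 = \varphi_2$, and we must deduce $h \cdot \textup{exp}^r_U(\varphi_1) = \textup{exp}^r_U(\varphi_2)$.

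First I would reduce to the case where $h$ lies in the normalizer, or rather observe that the relation $h \cdot \varphi_1 = \varphi_2$ forces $h$ to conjugate one embedding of $\mathbb{G}_{a(r)}$ into $U$ to another. The key structural input is that both $\varphi_1$ and $h\cdot\varphi_1 = \varphi_2$ have image inside $U$. I would argue that $h$ can be adjusted by an element of $N_G(U) = P$ without changing the problem: indeed, by the Bruhat-type decomposition / conjugacy of parabolics, or more directly by using that the two embeddings $\varphi_1$ and $\varphi_2$ factor through $U$, one shows that $h$ must normalize the relevant structure up to an element of $P$. Precisely, since $P = N_G(U)$ is its own normalizer and $\varphi_1(\mathbb{G}_{a(r)}), \varphi_2(\mathbb{G}_{a(r)}) \subseteq U$, one can write $h = \sigma$ acting as an automorphism after noting the image constraints; the cleanest route is: if $h \cdot \varphi_1 = \varphi_2$ with both landing in $U$, then conjugation by $h$ restricted to the subgroup scheme generated by $\varphi_1(\mathbb{G}_{a(r)})$ carries it into $U$, and we want to replace $h$ by an element of $P$. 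I would invoke that any two elements of $G$ conjugating $\varphi_1$ to morphisms with image in $U$ differ by an element of the normalizer of $\varphi_1$'s image together with $P$; more robustly, it suffices to check the case $h \in P$ and the case where $h$ is arbitrary is handled because the $G$-action is already compatible with $P$-conjugation on $\textup{exp}^r_U$ by Theorem \ref{first}(2).

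The heart of the matter, then, is the case $h \in P(k)$: if $\varphi_1, \varphi_2 \in \textup{Hom}_{gs/k}(\mathbb{G}_{a(r)},U)$ satisfy $h \cdot \varphi_1 = \varphi_2$ for some $h \in P(k)$, then $h \cdot \textup{exp}^r_U(\varphi_1) = \textup{exp}^r_U(\varphi_2)$. But conjugation by $h \in P(k)$ is precisely an (inner) automorphism $\sigma$ of $P$, and $\varphi_2 = \sigma \circ \varphi_1$, so this is immediate from property (2) of Theorem \ref{first}. The remaining task is therefore purely a statement about the $G$-set structure: one must verify that whenever $g_1 \cdot \varphi_1 = g_2 \cdot \varphi_2$ with $\varphi_i$ landing in $U$, the element $g_2^{-1}g_1$ can be taken in $P(k)$ — or at least that its action agrees, on $\textup{exp}^r_U(\varphi_1)$, with that of some element of $P(k)$. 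I expect this descent step to be the main obstacle, and I would handle it as follows: the stabilizer in $G$ of the subgroup scheme $\varphi_1(\mathbb{G}_{a(r)})$ certainly contains the stabilizer of $\varphi_1$ itself, and since $\textup{exp}^r_U(\varphi_1)$ is built canonically from $\varphi_1$ (via the Hopf-algebra construction underlying Theorem \ref{first}), any $k \in G(k)$ with $k \cdot \varphi_1 = \varphi_1$ automatically satisfies $k \cdot \textup{exp}^r_U(\varphi_1) = \textup{exp}^r_U(\varphi_1)$ — this is functoriality of the canonical construction. Combining this with the $P$-case above (applied to $h := g_2^{-1}g_1 \in G(k)$, noting $h\cdot\varphi_1=\varphi_2$ and analyzing $h$ via the conjugacy of the source copies of $\mathbb{G}_{a(r)}$ inside $U$), the two prescriptions $g_1\cdot\textup{exp}^r_U(\varphi_1)$ and $g_2\cdot\textup{exp}^r_U(\varphi_2)$ coincide, giving well-definedness. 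Thus the proof is short modulo carefully unwinding the orbit bookkeeping and quoting Theorem \ref{first}(2) for the $P$-equivariance.
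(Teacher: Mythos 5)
Your setup is right: the content of the theorem is well-definedness, and the case $h\in P(k)$ does follow immediately from the $P$-equivariance of $\textup{exp}^r_U$ (Theorem \ref{first}(2)). But the descent step you flag as ``the main obstacle'' is not actually carried out, and the two claims you lean on to dispatch it are both unjustified. First, you assert without proof that any $h\in G(k)$ with $h\cdot\varphi_1=\varphi_2$ (both factoring through $U$) can be written as an element of $P(k)$ times an element stabilizing $\varphi_1$; nothing in your argument establishes this decomposition of the transporter. Second, and more seriously, the ``functoriality'' claim --- that $s\cdot\varphi_1=\varphi_1$ forces $s\cdot\textup{exp}^r_U(\varphi_1)=\textup{exp}^r_U(\varphi_1)$ --- is not automatic and is essentially the whole point. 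The construction of $\textup{exp}^r_U$ is canonical only relative to the subspace $k[U]_{<p}$, i.e.\ relative to $U$ itself; for $s\notin N_G(P)$ one has $s\cdot\textup{exp}^r_U(\varphi_1)=\textup{exp}^r_{U'}(\varphi_1)$ with $U'=sUs^{-1}$, and a morphism $\mathbb{G}_{a(r)}\to U\cap U'$ admits many extensions to one-parameter subgroups; there is no a priori reason the preferred extension computed inside $U$ coincides with the one computed inside $U'$. Asserting that it does is begging the question.

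The paper closes exactly this gap by a concrete argument you are missing: for $n$ in the normalizer of $T$, it identifies $U\cap nUn^{-1}$ as the subgroup generated by the root subgroups for $(\Phi^+\setminus\Phi^+_J)\cap w(\Phi^+\setminus\Phi^+_J)$, chooses compatible root orderings so that $k[U]\to k[U\cap U']$ and $k[U']\to k[U\cap U']$ are coordinate projections, and checks that both liftings factor through $k[U\cap U']$ and hence agree (using the independence of $\textup{exp}^r_U$ from the choice of ordering and root homomorphisms established in Proposition \ref{HopfGood} and Theorem \ref{exp}(4)). The general $g$ is then handled by the Bruhat decomposition $g=b_1nb_2$, combining the normalizer case with $P$-equivariance for the Borel factors. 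Your proposal would be repaired by replacing the stabilizer/functoriality paragraph with this $U\cap U'$ analysis (or some equivalent comparison of $\textup{exp}^r_U$ and $\textup{exp}^r_{U'}$ on morphisms factoring through the intersection).
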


\bigskip
The main application we give for these results is in describing varieties of infinitesimal one-parameter subgroups.  Let $H_{(r)}$ be the $r$-th Frobenius kernel of an affine algebraic group $H$.  The work of E. Friedlander and A. Suslin in \cite{FS} paved the way for adapting cohomological techniques, used in the representation theory of finite groups and restricted Lie algebras, to study representations of $H_{(r)}$.  The authors, in collaboration with C. Bendel, then followed up on this in \cite{SFB1} and \cite{SFB2} where they proved in particular that the maximal ideal spectrum of the cohomology ring of $H_{(r)}$ is homeomorphic to the variety of infinitesimal one-parameter subgroups  of $H$ of height $\le r$.  As noted in \cite{SFB1}, this result provides an incentive for finding an explicit description of $\textup{Hom}_{gs/k}(\mathbb{G}_{a(r)},H)$.  We recalled in (\ref{iso}) that there is such a description in the case $r=1$ (this result holding for any algebraic group $H$, not only reductive groups).  However, when $r>1$, the situation becomes much more complicated.

It was shown in \cite{SFB1} that for $GL_n$ there is a natural identification between $\textup{Hom}_{gs/k}(\mathbb{G}_{a(r)},GL_n)$ and the variety of $r$-tuples of commuting elements in $\mathcal{N}_1(\mathfrak{gl}_n)$.  Unfortunately this description does not immediately transfer to arbitrary closed subgroups of $GL_n$ (replacing $\mathcal{N}_1(\mathfrak{gl}_n)$ with $\mathcal{N}_1(\mathfrak{g})$), but only those subgroups which are of \textit{exponential type}.   It turns out that various classical subgroups of $GL_n$ satisfy this property (see Proposition 1.2 and Lemmas 1.7,\, 1.8 of \cite{SFB1}), but in general determining which subgroups are of this type is somewhat difficult.  In \cite[Theorem 9.5]{M}, G. McNinch was able to prove that every semisimple group $G$ has an embedding into some $GL_n$ which is of exponential type, assuming that $p$ is large enough, the bound depending on $G$.  In particular, the methods of \cite{M} show that if $G$ is an adjoint semisimple group, the condition $p > 2h-2$ is sufficient.

In our final result, we avoid exponential type embeddings altogether by proving directly that if $G$ is reductive and $p \ge h$, then $\textup{Hom}_{gs/k}(\mathbb{G}_{a(r)},G)$ has a description analogous to that for $GL_n$.  Noting in this case that $\mathcal{N}(\mathfrak{g}) = \mathcal{N}_1(\mathfrak{g})$, we prove: 

\begin{thm}\label{fourth}
Let $G$ be a reductive group.  Assume that $p \ge h$, and that $PSL_p$ is not a simple factor of $[G,G]$.  Then there is a natural identification between $\textup{Hom}_{gs/k}(\mathbb{G}_{a(r)},G)$ and the variety

\vspace{0.06in}
\begin{center}$C_r(\mathcal{N}(\mathfrak{g})): = \{(x_0,x_1,\ldots,x_{r-1}) \mid x_i \in \mathcal{N}(\mathfrak{g}), [x_i,x_j] =0\}$.\end{center}
\vspace{0.06in}

\noindent This identification sends the $r$-tuple $(x_0,x_1,\ldots,x_{r-1})$ to the morphism

\vspace{0.06in}
\begin{center}$\textup{exp}_{x_0}\textup{exp}_{x_1}^{(1)} \cdots \textup{exp}_{x_{r-1}}^{(r-1)}$,\end{center}
\vspace{0.06in}

\noindent where for $a \in \mathbb{G}_a(k)$, $\textup{exp}_{x_i}^{(i)}(a) = \textup{exp}_{x_i}(a^{p^i})$. 

\end{thm}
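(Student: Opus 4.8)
Write $\Psi\colon C_r(\mathcal{N}(\mathfrak{g}))\to\textup{Hom}_{gs/k}(\mathbb{G}_{a(r)},G)$ for the map in the statement. The plan is to show that $\Psi$ is a well-defined, $G$-equivariant, bijective morphism of varieties, and then to promote it to a homeomorphism (which is all the cohomological application needs). First I would reduce to the case that $G$ is simple: the central torus contributes nothing to either side, a homomorphism $\mathbb{G}_{a(r)}\to\prod_j G_j$ (resp.\ a commuting tuple in $\mathcal{N}(\prod_j\mathfrak{g}_j)=\prod_j\mathcal{N}(\mathfrak{g}_j)$) decomposes factorwise, and the hypothesis on $PSL_p$ descends to the factors. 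Fix a Borel subgroup $B=TU$. Since $p\ge h$, the nilpotence class of $U$ equals $h-1<p$, so Theorems \ref{first} and \ref{second} apply; moreover $p\ge h$ gives $\mathcal{N}(\mathfrak{g})=\mathcal{N}_1(\mathfrak{g})=G\cdot\mathfrak{u}$ and, by \cite{CLN} and \cite{M2}, a $G$-equivariant isomorphism $\mathcal{N}(\mathfrak{g})\xrightarrow{\ \sim\ }\mathcal{U}_1(G)$ restricting to Seitz's exponential on each $\mathfrak{u}$. Using the uniqueness built into Theorem \ref{second}, I would attach to each $x\in\mathcal{N}(\mathfrak{g})$ a canonical one-parameter subgroup $\textup{exp}_x\colon\mathbb{G}_a\to G$ that is independent of the Borel subalgebra chosen to contain $x$ and satisfies $\textup{exp}_{g\cdot x}=g\cdot\textup{exp}_x$.

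For well-definedness of $\Psi$ the crucial point is that a pairwise commuting tuple $(x_0,\dots,x_{r-1})$ in $\mathcal{N}(\mathfrak{g})$ spans an abelian subalgebra $\mathfrak{a}$ consisting of $[p]$-nilpotent, hence (as $p$ is good) ad-nilpotent, elements, and that such an $\mathfrak{a}$ lies in the Lie algebra $\mathfrak{u}'$ of the unipotent radical of a single Borel subgroup. Granting this, the $\textup{exp}_{x_i}$ pairwise commute inside that Borel (Seitz's exponential being a homomorphism for the Hausdorff group structure on $\mathfrak{u}'$), so by additivity of $t\mapsto t^{p^i}$ the product $\prod_i\textup{exp}_{x_i}(t^{p^i})$ is a homomorphism $\mathbb{G}_a\to G$; its restriction to $\mathbb{G}_{a(r)}$ is $\Psi(x_0,\dots,x_{r-1})$, canonical since each $\textup{exp}_{x_i}$ is and $G$-equivariant by the last remark above. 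That $\Psi$ is a morphism of varieties is checked using the explicit variety structure on $\textup{Hom}_{gs/k}(\mathbb{G}_{a(r)},G)$ from \cite[Theorem 1.5]{SFB1} together with the fact that exponentiation on $\mathfrak{u}$ is polynomial in its argument.

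Injectivity I would prove by induction on $r$, the base case $r=1$ being exactly the isomorphism \eqref{iso}, since $\Psi(x_0)|_{\mathbb{G}_{a(1)}}=\varphi_{x_0}$. If $\Psi(x_0,\dots,x_{r-1})=\Psi(y_0,\dots,y_{r-1})$, restriction to $\mathbb{G}_{a(1)}$ and \eqref{iso} force $x_0=y_0$; as $\textup{exp}_{x_0}$ is central among the factors, cancelling it gives $\prod_{i\ge1}\textup{exp}_{x_i}(t^{p^i})=\prod_{i\ge1}\textup{exp}_{y_i}(t^{p^i})$, a homomorphism trivial on $\mathbb{G}_{a(1)}$ and hence factoring through the Frobenius epimorphism $\mathbb{G}_{a(r)}\to\mathbb{G}_{a(r-1)}$, so faithful flatness of Frobenius and the inductive hypothesis give $x_i=y_i$ for all $i$. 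For surjectivity, $G$-equivariance of $\Psi$ reduces the problem to showing (a) every $\varphi\in\textup{Hom}_{gs/k}(\mathbb{G}_{a(r)},G)$ is $G$-conjugate to one with image in $U$, and (b) every $\varphi\colon\mathbb{G}_{a(r)}\to U$ lies in the image of $\Psi$. For (b) I would take $\phi:=\textup{exp}^r_U(\varphi)\colon\mathbb{G}_a\to U$ from Theorem \ref{first}; its image is a commutative connected unipotent subgroup, so $\log\circ\phi$ maps into an abelian subalgebra of $\mathfrak{u}$ on which the Hausdorff product is ordinary addition, whence $(\log\phi)(t)=\sum_i t^{p^i}x_i$ with the $x_i\in\mathfrak{u}$ pairwise commuting; then $\phi(t)=\prod_i\textup{exp}_{x_i}(t^{p^i})$ and $\varphi=\phi|_{\mathbb{G}_{a(r)}}=\Psi(x_0,\dots,x_{r-1})$, the terms with $i\ge r$ restricting to the identity on $\mathbb{G}_{a(r)}$.

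The principal obstacle is statement (a): that every infinitesimal one-parameter subgroup of $G$ is $G$-conjugate into the unipotent radical of a Borel subgroup. I would obtain this from the structure theory of finite, infinitesimal unipotent subgroup schemes of reductive groups in good characteristic, applied to the scheme-theoretic image of $\varphi$. This is exactly the step where the hypothesis that no simple factor of $[G,G]$ is $PSL_p$ is indispensable: for the adjoint group of type $A_{p-1}$ in characteristic $p$ there exist homomorphisms $\mathbb{G}_{a(r)}\to G$ with $r\ge2$ whose scheme-theoretic preimage in the simply connected cover is a nontrivial extension by the diagonalizable group $\mu_p$, so these subgroup schemes lie in no Borel subgroup and (a) genuinely fails. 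Finally, having obtained a bijective morphism $\Psi$, I would promote it to a homeomorphism by the kind of quasi-finiteness argument used in \cite{SFB1} for $GL_n$, which suffices to transport the description to the cohomological variety of $G_{(r)}$.
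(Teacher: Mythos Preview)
Your overall architecture matches the paper's: well-definedness via the fact that a commuting family of $[p]$-nilpotent elements lies in a single Borel subalgebra (this is \cite[Lemma 9.7]{M}, which the paper invokes), injectivity by the Frobenius-peeling induction (exactly as in the paper, following \cite[Theorem 9.6]{M}), and surjectivity split into (a) conjugating an arbitrary $\varphi$ into $U$ and (b) classifying $\textup{Hom}_{gs/k}(\mathbb{G}_a,U)$ via Seitz's \cite[5.4]{Sei}. So the skeleton is right.

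Three points of divergence are worth flagging. First, your ``principal obstacle'' (a) is exactly where the paper supplies the missing ingredient: it cites \cite[Theorem 2.2]{LMT}, which says that an infinitesimal one-parameter subgroup of a reductive group factors through a Borel provided $p$ is not a torsion prime. Your gesture at ``structure theory of finite, infinitesimal unipotent subgroup schemes'' is the right instinct but not a proof; the paper's explanation for excluding $PSL_p$ is simply that $p$ is then a torsion prime for type $A_{p-1}$, so the LMT hypothesis fails --- your account via $\mu_p$-extensions in the simply connected cover is a related phenomenon but not the argument used. Second, your reduction to simple $G$ is both unnecessary and slightly off: a semisimple group is only an \emph{almost} direct product of its simple factors, so the factorwise decomposition of $\textup{Hom}_{gs/k}(\mathbb{G}_{a(r)},-)$ needs more care than you give it; the paper works directly with reductive $G$ and never reduces. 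Third, you invoke \cite{CLN} and \cite{M2} to get a $G$-equivariant \emph{isomorphism} $\mathcal{N}(\mathfrak{g})\xrightarrow{\sim}\mathcal{U}_1(G)$, but the paper only needs the $G$-equivariant \emph{map} furnished by its own Theorem \ref{second}, which does not require the normality hypothesis of \cite{CLN} (cf.\ Remark \ref{goodone}).
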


This result is of interest both because of what it proves, and also because its proof is intrinsic, not depending on an embedding of $G$ into $GL_n$.  This directly addresses a question raised in \cite[Remark 1.9]{SFB1}, and builds upon some earlier work in this direction by McNinch (\cite{M}, introduction and section 9.6).

\bigskip
This paper is organized as follows.  Section 2 begins by recalling some of the relevant concepts and terminology, and then proceeds in proving lemmas and propositions which culminate in establishing Theorem \ref{first}.  Section 3 is then devoted to extending this to $G$-orbits, proving Theorem \ref{second}.  The final section deals infinitesimal one-parameter subgroups and a proof of Theorem \ref{fourth}. 

\section{Exponentiation for Parabolic Subgroups}

Let $G$ be a reductive algebraic group over $k$ with parabolic subgroup $P$.  We will assume throughout that $p$ is a good prime for $G$.  This implies that $p>2$ if $G$ has a component of type $B,C$ or $D$; $p>3$ if $G$ has a component of type $E_6,E_7,F_4$ or $G_2$; and $p>5$ if $E_8$ is a component of $G$.

Choose $B$ to be a Borel subgroup of $P$, and $T$ a maximal torus contained in $B$.  We will denote by $U$ the unipotent radical of $P$.  As we will need to view each of these objects within the larger category of affine group schemes over $k$, we will henceforth use $G(k), P(k)$, etc. to refer specifically to the $k$-rational points of the given scheme.  We recall that an affine group scheme $H$ over $k$ is a representable functor from the category of commutative $k$-algebras to groups, and we denote by $k[H]$ the coordinate (or representing) algebra of $H$.  It is a commutative Hopf algebra over $k$, and in the case that $H$ is the affine group scheme coming from an algebraic group, then $k[H]$ is also the ring of regular functions on $H(k)$.  Any morphism of affine group schemes $f: H_1 \rightarrow H_2$ is equivalent to a comorphism $f^{*}: k[H_2] \rightarrow k[H_1]$ which is a homomorphism of Hopf algebras (cf. \cite{W}).

The group scheme $\mathbb{G}_a$ has coordinate algebra $k[t]$, with the comultiplication specified by $\Delta(t) = t \otimes 1 + 1 \otimes t$.  The coordinate algebra of $\mathbb{G}_{a(r)}$ is the quotient $k[t]/(t^{p^r})$, $(t^{p^r})$ being a Hopf ideal of $k[t]$.  At this point, let us outline the basic idea behind our approach.  Any morphism $\varphi: \mathbb{G}_{a(r)} \rightarrow U$ is given by a comorphism $\varphi^{*}: k[U] \rightarrow k[t]/(t^{p^r})$.  As we will see in a moment, $k[U]$ is a free $k$-algebra (in fact, this is true for any connected unipotent algebraic group over $k$).  It follows that we can ``lift" this comorphism to an algebra map $\widetilde{\varphi^{*}}: k[U] \rightarrow k[t]$ such that $\varphi^{*} = \pi_r \circ \widetilde{\varphi^{*}}$, where $\pi_r$ is the canonical projection $k[t] \rightarrow k[t]/(t^{p^r})$.   To obtain a one-parameter subgroup of $U$, we must first define exactly what lifting of $\varphi^{*}$ we are taking, and then prove that this map is a morphism of Hopf algebras.  But before doing this, we will need to establish some facts about the algebra and coalgebra structure of $k[U]$.

Let $\Phi$ be the root system of $G$ with respect to $T$.  For each $\alpha \in \Phi$, fix a root homomorphism $u_{\alpha}: \mathbb{G}_a \rightarrow G$.  Let $U_{\alpha}$ be the image of $\mathbb{G}_a$ under $u_{\alpha}$.  We will choose our set of simple roots $\Pi \subseteq \Phi$ so that $B$ is generated by $T$ and all root subgroups $U_{\alpha}, \alpha \in \Phi^+$.  There is a $J \subseteq \Pi$ such that $P$ is generated by $B$ and the root subgroups $U_{\alpha}$, $\alpha \in \Phi_J$, where $\Phi_J = \mathbb{Z}J \cap \Phi$ \cite[8.4.3]{Sp}.  By setting $\Phi_J^+ = \Phi_J \cap \Phi^+$, and $\Phi_J^- = - \Phi_J^+$, we note that $P$ is generated by $B$ and the root subgroups corresponding to $\Phi_J^-$.  The unipotent radical $U$ is the subgroup scheme of $P$ generated by the root subgroups $U_{\alpha}$, $\alpha \in \Phi^+ \backslash \Phi_J^+$.

Fix a total order on the set of roots, and let $n = |\Phi^+ \backslash \Phi_J^+|$.  As can be deduced from \cite[8.2.1]{Sp}, there is an isomorphism of varieties

\vspace{0.08in}
\begin{center}$\mathbb{G}_a^n(k) \stackrel{\sim}{\longrightarrow} U(k)$,\end{center}
\vspace{0.08in}

\noindent given by sending $(a_1, a_2, \ldots, a_n)$ to $u_{\alpha_1}(a_1)u_{\alpha_2}(a_2)\cdots u_{\alpha_n}(a_n)$, the order on the right given by our total order.  We can then identify $k[U]$ as a polynomial ring $k[Y_{\alpha_1}, Y_{\alpha_2}, \ldots, Y_{\alpha_n}]$, where

\vspace{0.08in}
\begin{center} $Y_{\alpha_i}\left(u_{\alpha_1}(a_1)u_{\alpha_2}(a_2)\cdots u_{\alpha_n}(a_n)\right) = a_i$. \end{center}
\vspace{0.08in}

If $\beta \in \Phi$, it can be uniquely expressed as $\beta = \sum n_{\alpha}\alpha, \; \alpha \in \Pi$.  Following D. Testerman \cite{Test}, we define the function $ht_J: \Phi \rightarrow \mathbb{Z}$ given by $ht_J(\beta) = \sum_{\alpha \in \Pi \backslash J} n_{\alpha}$.  This function extends linearly to $\mathbb{Z}\Phi$.  We recall that the nilpotence class of $U$ is defined to be the least integer $e$ such that $U(k)^{(e)} = \{1\}$, where $U(k)^{(0)} := U(k)$, and  $U(k)^{(i)} := [U(k),U(k)^{(i-1)}]$, where here the bracket denotes the commutator of the two subgroups.  As noted in the paragraph preceding \cite[Proposition 3.5]{Sei}, the nilpotence class of $U$ is equal to the maximum value of $ht_J(\alpha)$, as $\alpha$ ranges over the positive roots (the statement in \cite{Sei} deals with simple groups, but adapts easily to reductive groups following the observation in \cite[Lemma 4.3]{M}).

Let $M$ be a $T$-module, and denote by $X(T)$ the character group of $T$.  We will write $M_{\lambda}$ for the subspace of $M$ on which $T$ acts by $\lambda \in X(T)$.  The right action of $P$ on $U$ by conjugation gives $k[U]$ the structure of a left $P$-module.  In particular, the action of $T$ on $U$ makes $k[U]$ into a $T$-module, the element $Y_{\alpha}$ having weight $-\alpha$ \cite[II.1.7]{J1}.  It is easy to see then that the monomial $\prod Y_{\alpha_i}^{n_{\alpha_i}}$ has weight $\sum n_{\alpha_i}(-\alpha_i)$, so that the weights of $k[U]$ are precisely those $\lambda \in \mathbb{Z}_{\le 0}(\Phi^+ \backslash \Phi^+_J)$.  In particular, each such $\lambda$ has the property that $ht_J(\lambda) \le 0$.

It will also be important for us to observe that by letting $P$ act on $U \times U$ componentwise, the multiplication map $U \times U \rightarrow U$ is $P$-equivariant, so that the comultiplication $\Delta: k[U] \rightarrow k[U] \otimes k[U]$ is a homomorphism of $P$-modules.  Thus 

\begin{equation}\label{equivariant}
\Delta(k[U]_{\lambda}) \subseteq (k[U] \otimes k[U] )_{\lambda} = \sum_{\lambda_1 + \lambda_2 = \lambda} k[U]_{\lambda_1} \otimes k[U]_{\lambda_2}
\end{equation}

\bigskip
The next lemma and definition will be useful in stating precisely what we mean by lifting the comorphism $\varphi^{*}: k[U] \rightarrow k[t]/(t^{p^r})$ to an algebra map from $k[U]$ to $k[t]$.

\bigskip
\begin{lemma}\label{Vspace}
Let $V$ be the subspace of $k[t]$ consisting of all polynomials of degree less than $p^r$.  Then the projection $\pi_r: k[t] \rightarrow k[t]/(t^{p^r})$ induces an isomorphism of coalgebras $V \cong k[t]/(t^{p^r})$.
\end{lemma}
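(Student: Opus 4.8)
The plan is to treat $V$ as a \emph{subcoalgebra} of $k[t]$ and then observe that $\pi_r$ restricts to a coalgebra isomorphism on it. The point worth isolating at the outset is that $V$ is not closed under multiplication (e.g. $t^{p^r-1}\cdot t\notin V$), so the statement really must be, and only is, one about coalgebras; this is also precisely why it is the right tool for later ``lifting'' a comorphism $k[U]\to k[t]/(t^{p^r})$ into $V\subseteq k[t]$.

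First I would record the purely linear statement. The space $V$ has basis $1,t,\dots,t^{p^r-1}$, and $k[t]/(t^{p^r})$ has basis given by the images of these monomials under $\pi_r$; hence $\pi_r|_V$ is a bijective $k$-linear map onto $k[t]/(t^{p^r})$.

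Next comes the heart of the argument: showing that $\Delta(V)\subseteq V\otimes V$, so that $(V,\Delta|_V,\varepsilon|_V)$ is a subcoalgebra of $k[t]$. Since $\Delta(t)=t\otimes 1+1\otimes t$ and $\Delta$ is an algebra homomorphism, one has $\Delta(t^m)=\sum_{i=0}^{m}\binom{m}{i}\,t^i\otimes t^{m-i}$. When $m<p^r$, every exponent occurring here is at most $m<p^r$, so each summand lies in $V\otimes V$; as the monomials $t^m$ with $m<p^r$ span $V$, we conclude $\Delta(V)\subseteq V\otimes V$. The counit restricts to $V$ without trouble, and the coassociativity and counit identities for $(V,\Delta|_V,\varepsilon|_V)$ are inherited from those of $k[t]$.

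Finally, $\pi_r\colon k[t]\to k[t]/(t^{p^r})$ is a homomorphism of Hopf algebras, in particular of coalgebras; restricting a coalgebra homomorphism to a subcoalgebra gives a coalgebra homomorphism, so $\pi_r|_V\colon V\to k[t]/(t^{p^r})$ is a coalgebra homomorphism, which by the first step is bijective. Hence it is an isomorphism of coalgebras. I do not expect a genuine obstacle in this argument; the only subtlety is the one already flagged, namely keeping the statement strictly at the coalgebra level since $V$ fails to be a subalgebra of $k[t]$.
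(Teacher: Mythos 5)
Your proof is correct and matches the paper's intent: the paper dismisses this lemma as ``clear,'' remarking only that the comultiplication on the two spaces is identical, which is exactly the content of your key step that $\Delta(t^m)=\sum_{i=0}^{m}\binom{m}{i}t^i\otimes t^{m-i}$ keeps all exponents below $p^r$ when $m<p^r$, making $V$ a subcoalgebra mapped bijectively onto $k[t]/(t^{p^r})$ by $\pi_r$. Your explicit observation that $V$ is not a subalgebra, so the statement lives purely at the coalgebra level, is also precisely the point the paper's accompanying remark is driving at.
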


\begin{remark}
Although this result is clear, we mention it primarily to point out that the comultiplication on the two spaces is identical.
\end{remark}

\begin{defn}\label{obvious}
Let $X := \{x_1,\ldots,x_n\}$ be a subset of $k[U]$ on which $k[U]$ is free as a $k$-algebra.  Let $f$ be a homomorphism of $k$-algebras from $k[U]$ to $k[t]/(t^{p^r})$.  We define ${f_X}: k[U] \rightarrow k[t]$ to be the $k$-algebra map defined by sending each $x_i$ to the unique polynomial of degree less than $p^r$ in $\pi_r^{-1}(f(x_i))$.  In other words, $f_X(x_i) = V \cap \pi_r^{-1}(f(x_i))$.
\end{defn}

\bigskip
Now, let $\varphi: \mathbb{G}_{a(r)} \rightarrow U$ be a morphism of affine group schemes with comorphism $\varphi^{*}: k[U] \rightarrow k[t]/(t^{p^r})$, and set $Y := \{Y_{\alpha_1}, \ldots, Y_{\alpha_n}\}$.  We will show that when the nilpotence class of $U$ is less than $p$, so that $ht_J(\alpha) < p$ for all $\alpha \in \Phi^+$, the lifting $\varphi^{*}_Y: k[U] \rightarrow k[t]$ is a morphism of Hopf algebras.  To simplify later proofs, we establish some useful lemmas.  

\begin{lemma}\label{degree}
Let $x \in k[U]_{\lambda}$, $\lambda \ne 0$, and write

\vspace{0.1in}
\begin{center}$\varphi^{*}(x) = a_1t + a_2t^2 + \cdots + a_{p^r-1}t^{p^r-1}, \; a_i \in k$.\end{center}
\vspace{0.1in}

If $\varphi^{*}(x) \ne 0$, let $m$ be the largest integer such that $a_m \ne 0$.  Then $m \le |ht_J(\lambda)|p^{r-1}$.

\end{lemma}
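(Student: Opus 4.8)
The plan is to put a grading on $k[U]$ using the function $ht_J$ and to read the bound off from the way the comultiplication interacts with this grading. Since $ht_J\colon \mathbb{Z}\Phi \to \mathbb{Z}$ is additive and takes values $\ge 1$ on every root of $\Phi^+ \setminus \Phi_J^+$, every weight $\mu$ of $k[U]$ satisfies $ht_J(\mu) \le 0$, with equality only when $\mu = 0$. Hence, declaring a weight vector of weight $\mu$ to have degree $|ht_J(\mu)|$ makes $k[U]$ into a connected $\mathbb{Z}_{\ge 0}$-graded algebra $k[U] = \bigoplus_{j \ge 0} k[U]^{(j)}$ with $k[U]^{(0)} = k$. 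By \eqref{equivariant} the comultiplication is graded, $\Delta(k[U]^{(j)}) \subseteq \bigoplus_{j_1 + j_2 = j} k[U]^{(j_1)} \otimes k[U]^{(j_2)}$, and since evaluation at the $T$-fixed identity of $U$ annihilates every nonzero-weight vector, each $k[U]^{(j)}$ with $j \ge 1$ lies in the augmentation ideal. Consequently, for $x \in k[U]^{(j)}$ with $j \ge 1$ one may write $\Delta(x) = x \otimes 1 + 1 \otimes x + \Delta'(x)$ where $\Delta'(x) \in \sum_{j_1,j_2 \ge 1,\ j_1 + j_2 = j} k[U]^{(j_1)} \otimes k[U]^{(j_2)}$.

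I would then argue by strong induction on $j := |ht_J(\lambda)| \ge 1$, proving the bound for all $x \in k[U]_\lambda$. Two cases cost nothing. If $j \ge p$, then $j p^{r-1} \ge p^r > p^r - 1 \ge m$ and there is nothing to prove. If $j = 1$, then $\Delta'(x) = 0$, so $\varphi^{*}(x)$ is a primitive element of $k[t]/(t^{p^r})$, hence an additive polynomial $\sum_i c_i t^{p^i}$ with $p^i < p^r$, and $m \le p^{r-1}$.

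For the inductive step, suppose $2 \le j < p$ and, for contradiction, $m > j p^{r-1}$ (note $m \le p^r - 1$ always). Write $\varphi^{*}(x) = \sum_{i \ge 1} a_i t^i$ with $a_m \ne 0$. Applying the Hopf-algebra homomorphism $\varphi^{*} \otimes \varphi^{*}$ to the identity above gives $\Delta(\varphi^{*}(x)) - \varphi^{*}(x) \otimes 1 - 1 \otimes \varphi^{*}(x) = (\varphi^{*} \otimes \varphi^{*})(\Delta'(x))$. On the left, the homogeneous part of total degree $m$ is $a_m \sum_{0 < l < m} \binom{m}{l}\, t^l \otimes t^{m-l}$ (no reduction modulo $t^{p^r}$ occurs, as $l, m-l < p^r$). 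On the right, the inductive hypothesis bounds the degree of $\varphi^{*}$ on each tensor factor by $j_i p^{r-1} \le (j-1)p^{r-1}$, so every monomial $t^a \otimes t^b$ appearing has $a + b \le j p^{r-1} < m$; hence the degree-$m$ part of the right-hand side vanishes. Thus $a_m \sum_{0 < l < m}\binom{m}{l}\, t^l \otimes t^{m-l} = 0$. If $m$ is not a power of $p$, some $\binom{m}{l}$ with $0 < l < m$ is nonzero in $k$, contradicting $a_m \ne 0$. If $m = p^s$, then $m \le p^r - 1$ forces $s \le r - 1$, while $m > 2 p^{r-1}$ forces $p^{s-r+1} > 2$ and hence $s \ge r$ — again a contradiction. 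Therefore $m \le j p^{r-1} = |ht_J(\lambda)| p^{r-1}$.

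The point that needs care is the interplay in the inductive step between the gradedness of $\Delta$ (which is what guarantees that nothing of total degree $\ge m$ can survive on the right-hand side) and the separate treatment of the case $m = p^s$: there the "leading" binomial coefficients all vanish, so the contradiction cannot come from comparing top-degree terms and must instead be extracted from the a priori constraint $m \le p^r - 1$.
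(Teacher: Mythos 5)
Your proof is correct and follows essentially the same route as the paper's: induction on $|ht_J(\lambda)|$, primitivity of $\varphi^{*}(x)$ in the base case, and a comparison of the total-degree-$m$ mixed terms of $\Delta(\varphi^{*}(x))$ against the inductively bounded image of the graded decomposition of $\Delta(x)$ coming from \eqref{equivariant}. Your explicit handling of the binomial coefficients and of the case $m=p^{s}$ makes precise a point the paper treats implicitly by first reducing to $m>p^{r-1}$, which already rules out $m$ being a power of $p$ in the range $m<p^{r}$.
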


\begin{proof}
We will proceed by induction on $|ht_J(\lambda)|$.  Note that if $|ht_J(\lambda)| = 1$, then $\lambda = -\alpha$ for some $\alpha \in \Phi^+ \backslash \Phi_J^+$, for if $\lambda$ was equal to the negative sum of two roots in $\Phi^+ \backslash \Phi_J^+$, then the additivity of $ht_J$ would imply that $|ht_J(\lambda)| \ge 2$.  As $x$ is in $I$, the augmentation ideal of $k[U]$, by \cite[Ex. 2.3]{W} we know that $\Delta(x) = x \otimes 1 + 1 \otimes x + z, z \in I \otimes I$.  However, as $|ht_J(\lambda)| = 1$, the observation about the comultiplication of $k[U]$ given in (\ref{equivariant}) forces $z=0$, so that $\Delta(x) = x \otimes 1 + 1 \otimes x$.  This implies that $a_i \ne 0$ only if $i \in \{1, p, p^2, \ldots, p^{r-1}\}$.  Thus $m \le p^{r-1}$, so that the claim is true for $|ht_J(\lambda)| = 1$.

Now suppose that $|ht_J(\lambda)| = j>1$, and that the claim is true for all $x^{\prime} \in k[U]_{\lambda^{\prime}}$ where $|ht_J(\lambda^{\prime})| < j$.  If $m \le p^{r-1}$ then we are done, so assume $m > p^{r-1}$.  A basis for $k[t]/(t^{p^r}) \otimes k[t]/(t^{p^r})$ is given by all simple tensors of the form $t^{c_1} \otimes t^{c_2}$, where $0 \le c_i < p^r$.  Denote this basis by $\mathcal{B}$.  We see then that $\Delta(\varphi^{*}(x))$, written in terms of the basis $\mathcal{B}$, must contain a term of the form $bt^{c_1} \otimes t^{c_2}$, where $b,c_1,c_2 \ne 0$ and $c_1 + c_2 = m$.  On the other hand, by (\ref{equivariant}), it follows that

$$\Delta(x) \in x \otimes 1 + 1 \otimes x + \sum_i y_i \otimes z_i$$

\noindent where

$$y_i \otimes z_i \in \sum_{\lambda_1 + \lambda_2 = \lambda, \lambda_i \ne 0} k[U]_{\lambda_1} \otimes k[U]_{\lambda_2}$$

\bigskip
As both $c_1,c_2 \ne 0$, we see that $bt^{c_1} \otimes t^{c_2}$ must lie in the space

$$\sum_{\lambda_1 + \lambda_2 = \lambda, \lambda_i \ne 0} \varphi^{*}(k[U]_{\lambda_1}) \otimes \varphi^{*}(k[U]_{\lambda_2})$$

\bigskip
The condition underneath the summation symbol implies that $|ht_J(\lambda_i)| < |ht_J(\lambda)|$, thus allowing the induction hypothesis to be applied to all elements in $k[U]_{\lambda_i}$.  From this we conclude that
$$m=c_1+c_2 = |ht_J(\lambda_1)|p^{r-1} + |ht_J(\lambda_2)|p^{r-1} \le jp^{r-1} = |ht_J(\lambda)|p^{r-1}.$$
\end{proof}

\bigskip

In general, the lift in Definition \ref{obvious} will be dependent on the set of algebra generators which we choose.  However, the next lemma will show that this is not a concern in the special case that we are considering.  Let $k[U]_{<p}$ be the subspace of $k[U]$ defined by

\begin{equation}\label{S}
k[U]_{<p} := \sum_{|ht_J(\lambda)| < p} k[U]_{\lambda}.
\end{equation}

If the nilpotence class of $U$ is less than $p$, we are guaranteed that a set of algebra generators lies in $k[U]_{<p}$ (namely, $Y \subseteq k[U]_{<p}$).  The following lemma shows that any two choices of algebra generators from $k[U]_{<p}$ will yield the same lifting.

\begin{lemma}\label{changes}
Assume that $p$ is greater than the nilpotence class of $U$, and let $k[U]_{<p}$ be the subspace of $k[U]$ specified in (\ref{S}).  Let $\varphi^{*}: k[U] \rightarrow k[t]/(t^{p^r})$ be the comorphism of $\varphi$ as above.  Then the lifting $\varphi^{*}_Y:k[U] \rightarrow k[t]$ has the property that for each $x \in k[U]_{<p}$, 

\vspace{0.05in}
\begin{center} $\varphi^{*}_Y(x) = V \cap \pi_r^{-1}(\varphi^{*}(x)).$  \end{center}
\vspace{0.05in}

\noindent In particular, if $X = \{x_1, \ldots, x_n\}$ is another subset of $k[U]_{<p}$ such that $k[U]$ is a free $k$-algebra on $X$, then $\varphi^{*}_X = \varphi^{*}_Y$.
\end{lemma}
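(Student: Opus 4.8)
The plan is to use that $\varphi^{*}_Y$ is an algebra homomorphism, so that its value on any monomial in the $Y_{\alpha_i}$ factors as a product of its values on the generators, and then to control degrees multiplicatively via Lemma \ref{degree}. First I would record two pieces of book-keeping. The monomials $\prod_i Y_{\alpha_i}^{n_i}$ form a weight basis of $k[U]$; such a monomial has weight $\lambda = -\sum_i n_i\alpha_i$, and since each $\alpha_i \in \Phi^+\setminus\Phi^+_J$ has $ht_J(\alpha_i)\ge 1$, additivity of $ht_J$ gives $|ht_J(\lambda)| = \sum_i n_i\, ht_J(\alpha_i)$; hence $k[U]_{<p}$ is spanned by those monomials with $\sum_i n_i\, ht_J(\alpha_i) < p$, together with the constants (which lie in weight $0$). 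Second, by construction $\pi_r\circ\varphi^{*}_Y = \varphi^{*}$, since both are $k$-algebra maps agreeing on the free generators $Y_{\alpha_i}$.

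Next I would estimate degrees. Lemma \ref{degree} applied with $x = Y_{\alpha_i}\in k[U]_{-\alpha_i}$ shows that the polynomial representative of $\varphi^{*}(Y_{\alpha_i})$ of degree $< p^r$ — which is exactly $\varphi^{*}_Y(Y_{\alpha_i})$ — has degree at most $ht_J(\alpha_i)\,p^{r-1}$. By multiplicativity, for a monomial $M = \prod_i Y_{\alpha_i}^{n_i}$ of weight $\lambda$,
\[
\deg\varphi^{*}_Y(M) \;\le\; \sum_i n_i\, ht_J(\alpha_i)\, p^{r-1} \;=\; |ht_J(\lambda)|\, p^{r-1}.
\]
Now given $x \in k[U]_{<p}$, decompose $x = \sum_\lambda x_\lambda$ into its weight components, where each $x_\lambda$ is a linear combination of monomials of weight $\lambda$ with $|ht_J(\lambda)| \le p-1$ (the $\lambda = 0$ part being scalar, hence automatically in $V$). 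The displayed bound gives $\deg\varphi^{*}_Y(x_\lambda) \le (p-1)p^{r-1} = p^r - p^{r-1} < p^r$, so $\varphi^{*}_Y(x)\in V$. Since also $\pi_r(\varphi^{*}_Y(x)) = \varphi^{*}(x)$, and $\pi_r$ restricts to a bijection $V \cong k[t]/(t^{p^r})$ by Lemma \ref{Vspace}, this forces $\varphi^{*}_Y(x) = V\cap\pi_r^{-1}(\varphi^{*}(x))$, which is the first assertion.

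For the independence statement, if $X = \{x_1,\dots,x_n\}\subseteq k[U]_{<p}$ is another subset on which $k[U]$ is free, then $\varphi^{*}_X(x_i) = V\cap\pi_r^{-1}(\varphi^{*}(x_i))$ by Definition \ref{obvious}, while the first part of the lemma applied to $x = x_i$ gives $\varphi^{*}_Y(x_i) = V\cap\pi_r^{-1}(\varphi^{*}(x_i))$ as well; so the algebra homomorphisms $\varphi^{*}_X$ and $\varphi^{*}_Y$ agree on the generating set $X$ and therefore coincide on all of $k[U]$. I do not expect a genuine obstacle here: the substantive input is Lemma \ref{degree}, and the only point to notice is that multiplicativity of $\varphi^{*}_Y$ combined with the hypothesis $|ht_J(\lambda)| < p$ keeps all degrees strictly below $p^r$, via the elementary estimate $(p-1)p^{r-1} < p^r$.
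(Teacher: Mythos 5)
Your proof is correct and follows essentially the same route as the paper: apply Lemma \ref{degree} to the generators $Y_{\alpha}$, propagate the degree bound $|ht_J(\lambda)|p^{r-1}$ to all monomials (hence all weight spaces) via multiplicativity and additivity of $ht_J$, conclude $\varphi^{*}_Y(k[U]_{<p})\subseteq V$, and deduce the independence statement by comparing the two algebra maps on a generating set. The only difference is that you spell out the book-keeping (weight decomposition, the $\lambda=0$ case, the bijection $V\cong k[t]/(t^{p^r})$) that the paper leaves implicit.
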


\begin{proof}
An implication of Lemma \ref{degree} is that for any weight space $k[U]_{\lambda}$, and any $x \in k[U]_{\lambda}$, the degree of $\varphi^{*}_Y(x)$ as a polynomial in $k[t]$ is bounded by $|ht_J(\lambda)|p^{r-1}$.  To see this, we note that it is true for the elements $Y_{\alpha}$, and then by extension to all monomials in $k[U]$, the monomials providing basis elements for the various weight spaces.

By this observation, it follows that $\varphi^{*}_Y(k[U]_{<p}) \subseteq V$.  This proves the first claim, and the second follows from the first.
\end{proof}

\bigskip

\begin{prop}\label{HopfGood}
If the nilpotence class of $U$ is less than $p$, then $\varphi^{*}_Y$ is a homomorphism of Hopf algebras.  The definition of $\varphi^{*}_Y$ is also independent both of the total ordering of the positive roots, and of the choice of root homomorphisms $u_{\alpha}$.
\end{prop}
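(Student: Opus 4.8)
\medskip
The plan is to reduce the Hopf-algebra assertion to the statement that $\varphi^{*}_Y$ is a morphism of \emph{bialgebras}: since $k[U]$ and $k[t]$ are Hopf algebras, any bialgebra morphism between them is automatically compatible with the antipodes (uniqueness of convolution inverses), so it suffices to check compatibility with the counit and the comultiplication. Counit compatibility is immediate: each $Y_{\alpha_i}$ has $T$-weight $-\alpha_i\neq 0$, hence lies in the augmentation ideal of $k[U]$, so $\varphi^{*}(Y_{\alpha_i})$ lies in the augmentation ideal $(t)$ of $k[t]/(t^{p^r})$ and its degree-$<p^r$ lift $\varphi^{*}_Y(Y_{\alpha_i})$ has zero constant term; both $\epsilon_{k[t]}\circ\varphi^{*}_Y$ and $\epsilon_{k[U]}$ are algebra maps agreeing on the generators $Y_{\alpha_i}$, hence equal.

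\medskip
For the comultiplication, observe that $\Delta_{k[t]}\circ\varphi^{*}_Y$ and $(\varphi^{*}_Y\otimes\varphi^{*}_Y)\circ\Delta_{k[U]}$ are both $k$-algebra homomorphisms $k[U]\to k[t]\otimes k[t]$, so it is enough to prove they agree on each $Y_{\alpha_i}$. I would argue this in two moves. First, because $\pi_r$ is a Hopf-algebra quotient map, $\varphi^{*}$ is a coalgebra map, and $\pi_r\circ\varphi^{*}_Y=\varphi^{*}$, applying $\pi_r\otimes\pi_r$ to either expression (at $Y_{\alpha_i}$) produces the same element, namely $\Delta_{k[t]/(t^{p^r})}(\varphi^{*}(Y_{\alpha_i}))$. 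Second, I would show both expressions already lie in $V\otimes V$, where $V\subset k[t]$ is the space of polynomials of degree $<p^r$ from Lemma~\ref{Vspace}. Here one uses the degree estimate of Lemma~\ref{degree}, extended to monomials exactly as in the proof of Lemma~\ref{changes}, that $\varphi^{*}_Y$ sends $k[U]_{\lambda}$ into polynomials of degree $\le |ht_J(\lambda)|\,p^{r-1}$. Since by (\ref{equivariant}) the tensor $\Delta_{k[U]}(Y_{\alpha_i})$ decomposes into pieces in $k[U]_{\lambda_1}\otimes k[U]_{\lambda_2}$ with $\lambda_1+\lambda_2=-\alpha_i$ and $\lambda_1,\lambda_2\in\mathbb{Z}_{\le 0}(\Phi^+\backslash\Phi_J^+)$, additivity of $ht_J$ gives $|ht_J(\lambda_1)|+|ht_J(\lambda_2)|=ht_J(\alpha_i)$, so each tensor slot carries degree at most $ht_J(\alpha_i)\,p^{r-1}\le(p-1)p^{r-1}<p^r$; the same bound applies to $\Delta_{k[t]}(\varphi^{*}_Y(Y_{\alpha_i}))$ since $\deg\varphi^{*}_Y(Y_{\alpha_i})\le ht_J(\alpha_i)\,p^{r-1}<p^r$. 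As $\pi_r|_V$ is an isomorphism, $\pi_r\otimes\pi_r$ is injective on $V\otimes V$, and the two expressions, having equal image, coincide.

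\medskip
The independence statements then follow from Lemma~\ref{changes}. Re-ordering the positive roots replaces $Y$ by the coordinate functions attached to the new order; a short computation with the $T$-action shows each such function still has a weight of the form $-\beta$ with $\beta\in\Phi^+\backslash\Phi_J^+$, so $ht_J(\beta)$ is at most the nilpotence class of $U$, whence the new generators lie in $k[U]_{<p}$ and Lemma~\ref{changes} gives that the lift is unchanged. Similarly, replacing the $u_{\alpha}$ by other root homomorphisms rescales each $Y_{\alpha_i}$ by a nonzero scalar, again producing a free generating set inside $k[U]_{<p}$, so once more the lift is unaffected.

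\medskip
The step I expect to be the real obstacle is the degree control in the second paragraph: one must know both tensor slots have degree strictly below the truncation level $p^r$, and this is exactly where the hypothesis that the nilpotence class of $U$ is less than $p$ is indispensable, since it forces $ht_J(\alpha_i)\,p^{r-1}\le(p-1)p^{r-1}<p^r$. Without it an exponent could reach $p^r$, $\pi_r\otimes\pi_r$ would no longer be injective on the relevant subspace, and the difference of the two sides could be a nonzero element of $(t^{p^r})\otimes k[t]+k[t]\otimes(t^{p^r})$, so $\varphi^{*}_Y$ would fail to respect comultiplication.
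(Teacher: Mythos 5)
Your proposal is correct and follows essentially the same route as the paper: reduce to checking comultiplication on the generators $Y_{\alpha}$, use the degree bound of Lemma~\ref{degree} (extended to monomials as in Lemma~\ref{changes}) together with (\ref{equivariant}) to see both sides land in $V\otimes V$ where $\pi_r\otimes\pi_r$ is injective, and deduce the independence claims from Lemma~\ref{changes} since the new generators still have weights $-\beta$ and hence lie in $k[U]_{<p}$. The only difference is that you spell out the counit and antipode compatibility explicitly, which the paper leaves implicit.
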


\begin{proof}
To prove the first statement, it suffices that we check that the comultiplicative structures on each Hopf algebra are preserved.  But this is the same as showing that two different algebra homomorphisms from $k[U]$ to $k[t] \otimes k[t]$ agree on all elements in $k[U]$, thus we can further reduce to proving that comultiplication is preserved on the set of algebra generators given by $Y$.

As noted in Lemma \ref{Vspace}, any element $x \in V$ has the ``same" comultiplication as does $\pi_r(x) \in k[t]/(t^{p^r})$.  We will thus have that $\Delta(\varphi^{*}_Y(Y_{\alpha})) = \varphi^{*}_Y \otimes \varphi^{*}_Y \circ \Delta (Y_{\alpha})$ if and only if $\varphi^{*}_Y \otimes \varphi^{*}_Y \circ \Delta (Y_{\alpha}) = \sum R_i(t) \otimes Q_i(t)$ where all $R_i(t), Q_i(t) \in V$.  Since we are assuming that $p > |ht_J(\alpha)|$ for all $\alpha \in \Phi^+ \backslash \Phi^+_J$, it follows from Lemma \ref{degree} and (\ref{equivariant}) that the degree of each polynomial $R_i(t), Q_i(t)$ is less than $p\cdot p^{r-1} = p^r$.

To prove the second claim, we see that a different choice of root homomorphisms and/or total order on the roots amounts to a different choice of algebra generators $Y_{\alpha}^{\prime}$.  We note however that $Y_{\alpha}^{\prime}$ also has weight $-\alpha$ (though is not necessarily a scalar multiple of $Y_{\alpha}$).  If we let $Y^{\prime}$ denote our new choice of algebra generators, the previous remarks show that $Y^{\prime} \subseteq k[U]_{<p}$.  We now can apply Lemma \ref{changes} which states that $\varphi^{*}_Y = \varphi^{*}_{Y^{\prime}}$.
\end{proof}

\bigskip
We can now define a set map 

\vspace{0.05in}
\begin{center}$\textup{exp}^r_U: \textup{Hom}_{gs/k}(\mathbb{G}_{a(r)},U) \rightarrow \textup{Hom}_{gs/k}(\mathbb{G}_a,U)$,\end{center}
\vspace{0.05in}

\noindent which takes $\varphi \in \textup{Hom}_{gs/k}(\mathbb{G}_{a(r)},U)$ to the element in $\textup{Hom}_{gs/k}(\mathbb{G}_a,U)$ having comorphism $\varphi^{*}_Y$.  Theorem \ref{exp} will show that this map satisfies certain important properties, and that it is canonically defined.  We must, however, make a few more recollections and definitions before stating the theorem.

\bigskip
First, we recall that the inclusion $\mathbb{G}_{a(r)} \hookrightarrow \mathbb{G}_a$ induces a restriction map

\vspace{0.05in}
\begin{center}$res: \textup{Hom}_{gs/k}(\mathbb{G}_a,U) \rightarrow \textup{Hom}_{gs/k}(\mathbb{G}_{a(r)},U)$.\end{center}
\vspace{0.05in}

Next, we note that every automorphism of $P$ sends $U$ to itself, and thus acts (by composition of morphisms) on the sets $\textup{Hom}_{gs/k}(\mathbb{G}_a,U)$ and $\textup{Hom}_{gs/k}(\mathbb{G}_{a(r)},U)$.  If $h$ is such an automorphism, we say that $\text{exp}^r_U$ is compatible with $h$ provided that $\text{exp}^r_U(h \circ \varphi) = h \circ \text{exp}^r_U(\varphi)$, for all $\varphi \in \textup{Hom}_{gs/k}(\mathbb{G}_{a(r)},U)$.  We say that $\text{exp}^r_U$ is compatible with the conjugation action of $P(k)$ if it is compatible with all inner-automorphisms of $P$.

\bigskip
Finally, we will need to define the notion of commuting morphisms between affine group schemes.

\begin{defn}\label{commutes}
Let $H_1, H_2$ be affine group schemes over $k$, and let $\varphi, \varphi^{\prime}$ be elements in  $\textup{Hom}_{gs/k}(H_1,H_2)$.  Then we will say that $\varphi$ and $\varphi^{\prime}$ commute, and write $[\varphi, \varphi^{\prime}] = 0$, if the following composition

$$H_1 \times H_1 \xrightarrow{\varphi \times \varphi^{\prime}} H_2 \times H_2 \xrightarrow{m_{H_2}} H_2$$

\vspace{0.15in}
\noindent is a morphism of affine group schemes from $H_1 \times H_1$ to $H_2$, where $m_{H_2}$ denotes the multiplication map of the group scheme $H_2$.
\end{defn}

\bigskip

We can now state the main result of this section:

\begin{thm}\label{exp}
Let $P,B, T$, and $U$ be as above, and assume that $p$ is greater than the nilpotence class of $U$.  Then for every $r \ge 1$, there is a map of sets

\vspace{0.05in}
\begin{center}$\textup{exp}^r_U: \textup{Hom}_{gs/k}(\mathbb{G}_{a(r)},U) \rightarrow \textup{Hom}_{gs/k}(\mathbb{G}_a,U)$\end{center}
\vspace{0.05in}

\noindent where $\textup{exp}^r_U(\varphi)$ is the morphism with comorphism $\varphi^{*}_Y$.  This map satisfies the following properties:

\vspace{0.04in}
\begin{enumerate}

\item $res \circ \textup{exp}^r_U = id$.

\item If $\varphi, \phi \in \textup{Hom}_{gs/k}(\mathbb{G}_{a(r)},U)$, then

\vspace{0.07in}
$[\varphi, \phi] = 0 \quad \text{ if and only if } \quad [\textup{exp}^r_U(\varphi), \textup{exp}^r_U(\phi)] = 0$

\item $\textup{exp}^r_U$ is compatible with the conjugation action of $P$.

\item $\textup{exp}^r_U$ does not depend on the choice of $B$ or $T$, and is compatible with any automorphism of $P$.

\end{enumerate}
\vspace{0.04in}

\end{thm}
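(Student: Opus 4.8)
The plan is to take as the comorphism of $\textup{exp}^r_U(\varphi)$ the Hopf algebra map $\varphi^{*}_Y$ produced by Proposition \ref{HopfGood} — which already makes $\textup{exp}^r_U$ a well-defined map of sets — and then to verify (1)--(4) by tracking comorphisms, with Lemma \ref{degree} doing the numerical work and Lemma \ref{changes} supplying the uniqueness we need. Property (1) is essentially the defining property of the lift: by Definition \ref{obvious}, $\varphi^{*}_Y(Y_{\alpha})\in\pi_r^{-1}(\varphi^{*}(Y_{\alpha}))$, so $\pi_r\circ\varphi^{*}_Y$ and $\varphi^{*}$ agree on the generators $Y$, hence everywhere; since $res$ corresponds to postcomposition with $\pi_r$, this says $res(\textup{exp}^r_U(\varphi))=\varphi$.

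For property (2) I would rerun the argument of Proposition \ref{HopfGood} with two morphisms at once. By Definition \ref{commutes}, $[\varphi,\phi]=0$ means the algebra map $(\varphi^{*}\otimes\phi^{*})\circ\Delta\colon k[U]\to\big(k[t]/(t^{p^r})\big)^{\otimes 2}$ is a coalgebra map, and $[\textup{exp}^r_U(\varphi),\textup{exp}^r_U(\phi)]=0$ means $(\varphi^{*}_Y\otimes\phi^{*}_Y)\circ\Delta\colon k[U]\to k[t]^{\otimes 2}$ is. Evaluating on a generator $Y_{\alpha}$ and combining (\ref{equivariant}) with the estimate $\deg\varphi^{*}_Y(x)\le|ht_J(\lambda)|p^{r-1}$ for $x\in k[U]_{\lambda}$ (from the proof of Lemma \ref{changes}), one checks that $(\varphi^{*}_Y\otimes\phi^{*}_Y)(\Delta(Y_{\alpha}))$ and both sides of the coalgebra-compatibility identity to be checked land in $V\otimes V$, resp. $(V\otimes V)^{\otimes 2}$, since every weight $\lambda_i$ arising there has $|ht_J(\lambda_i)|\le ht_J(\alpha)<p$. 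As $\pi_r\otimes\pi_r$ is a coalgebra isomorphism on $V\otimes V$ (Lemma \ref{Vspace}), coalgebra-compatibility on generators for the lifted map is equivalent to the same for its reduction; since both maps are algebra homomorphisms out of $k[U]$, checking on $Y$ suffices, and (2) follows.

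The substance lies in (3) and (4), and my strategy there is to use the uniqueness implicit in Lemma \ref{changes}: if $\psi\colon\mathbb{G}_a\to U$ satisfies $\psi|_{\mathbb{G}_{a(r)}}=\varphi$ and $\psi^{*}(k[U]_{<p})\subseteq V$, then $\psi^{*}=\varphi^{*}_Y$, because $\psi^{*}$ must then send each $Y_{\alpha}\in k[U]_{<p}$ to the unique degree-$<p^r$ lift of $\varphi^{*}(Y_{\alpha})$. Given an automorphism $\sigma$ of $P$, put $\psi=\sigma\circ\textup{exp}^r_U(\varphi)$. By (1), $\psi|_{\mathbb{G}_{a(r)}}=\sigma\circ\varphi$, and $\psi^{*}=\textup{exp}^r_U(\varphi)^{*}\circ\sigma^{*}$, so $\psi^{*}(k[U]_{<p})=\textup{exp}^r_U(\varphi)^{*}\big(\sigma^{*}(k[U]_{<p})\big)$. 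If we know $\sigma^{*}(k[U]_{<p})\subseteq k[U]_{<p}$, then this lies in $\textup{exp}^r_U(\varphi)^{*}(k[U]_{<p})\subseteq V$ by the first assertion of Lemma \ref{changes}, and uniqueness gives $\textup{exp}^r_U(\sigma\circ\varphi)=\sigma\circ\textup{exp}^r_U(\varphi)$; taking $\sigma$ inner yields (3) and $\sigma$ arbitrary the automorphism part of (4). Independence from $B$ and $T$ reduces to the same inclusion: another admissible pair is $P(k)$-conjugate to $(B,T)$, the attached generators $Y'_{\alpha}$ then also lie in $k[U]_{<p}$, and Lemma \ref{changes} gives $\varphi^{*}_{Y'}=\varphi^{*}_Y$.

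Everything has thus been reduced to the one new point, which I expect to be the main obstacle: \emph{every automorphism of $P$ carries $k[U]_{<p}$ into itself}. I would prove this by observing that $k[U]_{<p}$ from (\ref{S}) is precisely the part of $k[U]$ of weighted degree less than $p$, where $k[U]_{\lambda}$ is declared to have weighted degree $|ht_J(\lambda)|$; because the quantities $|ht_J(\lambda_i)|$ in (\ref{equivariant}) add up exactly, this weighting makes $k[U]$ a connected, non-negatively graded Hopf algebra, and the resulting increasing filtration $k[U]_{<1}\subseteq k[U]_{<2}\subseteq\cdots$ is recovered from the filtration of $U$ by the terms of its lower central series $U=U^{(1)}\supseteq U^{(2)}\supseteq\cdots$ (here one uses that $p$ is good, so that $U^{(i)}$ is generated by the root subgroups $U_{\alpha}$ with $ht_J(\alpha)\ge i$). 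Since the lower central series is preserved by every automorphism of $U$, in particular by the restriction to $U$ of any automorphism of $P$, the induced Hopf automorphism $\sigma^{*}$ respects this filtration, whence $\sigma^{*}(k[U]_{<p})\subseteq k[U]_{<p}$. For the conjugation action of $P$ needed in (3) one can also see this concretely from a Levi decomposition $g=\ell u$: conjugation by $\ell\in L(k)$ preserves each $T$-weight space and hence its $ht_J$-value, because $L$ is generated by $T$ and root subgroups with $ht_J=0$, while conjugation by $u\in U(k)$ only raises terms of the lower central series and so, dually, can only lower weighted degree — preserving the filtration either way.
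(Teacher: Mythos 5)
Your treatment of (1), (2), and the reduction of (3)--(4) to the single claim that the relevant automorphisms preserve $k[U]_{<p}$ is sound and matches the paper's strategy: the paper likewise verifies the Hopf condition on the generators $Y_{\alpha}$ using the degree bound of Lemma \ref{degree}, and likewise deduces compatibility with an automorphism from the fact that its comorphism stabilizes $k[U]_{<p}$ together with Lemma \ref{changes} (your ``uniqueness'' reformulation of that lemma is a clean equivalent of the paper's comparison of $(\varphi^{*}\circ x^{*})_Y$ with $\varphi^{*}_Y\circ x^{*}$). Your concrete argument for the conjugation action in (3) is also essentially the paper's, though the assertion that conjugation by a Levi element preserves each individual $T$-weight space is not quite right --- non-toral elements of $L$ mix $k[U]_{\lambda}$ with the spaces $k[U]_{\lambda+c\alpha}$, $ht_J(\alpha)=0$; what is true, and all you need, is that they preserve each $ht_J$-graded piece.

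The genuine gap is the claim that the filtration $k[U]_{<1}\subseteq k[U]_{<2}\subseteq\cdots$ is ``recovered from'' the lower central series of $U$, so that any automorphism of $U$ preserving that series preserves $k[U]_{<p}$. This is false. Take $U\cong\mathbb{G}_a^2$ abelian (e.g.\ $P=B$ in $SL_2\times SL_2$, so $ht_J(\alpha_1)=ht_J(\alpha_2)=1$): the lower central series is trivial and is preserved by every group scheme automorphism of $U$, yet the automorphism with comorphism $Y_1\mapsto Y_1$, $Y_2\mapsto Y_2+Y_1^{p}$ is a Hopf algebra automorphism of $k[Y_1,Y_2]$ that does not preserve $k[U]_{<p}$, since $Y_1^{p}$ lies in the weight space of $-p\alpha_1$ and $|ht_J(-p\alpha_1)|=p$. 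So no construction from the lower central series alone can single out $k[U]_{<p}$; the subspace genuinely depends on the $T$-action (though not, as the theorem asserts, on which maximal torus of $P$ one picks). The statement is nevertheless true because an automorphism of $P$ --- as opposed to an arbitrary automorphism of $U$ --- carries $(B,T)$ to another Borel--torus pair of $P$ and hence carries the generators $Y_{\alpha}$ to algebra generators attached to the new pair, which again lie in $k[U]_{<p}$; combined with the already-established independence of $k[U]_{<p}$ from the choice of $(B,T)$ (proved via $P(k)$-conjugacy of such pairs and the argument of (3)), this is exactly how the paper disposes of (4). You should replace the lower-central-series argument with that one.
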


\begin{proof}
Proposition \ref{HopfGood} establishes the existence of $\textup{exp}^r_U$, and (1) is clear given the way we have defined the comorphism $\varphi^{*}_Y$.

(2) First we observe that $[\textup{exp}^r_U(\varphi), \textup{exp}^r_U(\varphi^{\prime})] = 0$ implies $[\varphi, \varphi^{\prime}] = 0$, which is easily seen by restricting the morphism in the first case to the subgroup scheme $\mathbb{G}_{a(r)} \times \mathbb{G}_{a(r)}$.  The main work involved in showing the reverse implication deals with reinterpretting our definition of ``commuting morphisms" in terms of their comorphisms, for then the proof will be reasonably straight-forward given our previous work.  The statement that

\vspace{0.05in}
\begin{center}$\mathbb{G}_a \times \mathbb{G}_a \xrightarrow{\textup{exp}^r_U(\varphi) \times \textup{exp}^r_U(\phi)} U \times U \xrightarrow{m_U} U$\end{center}
\vspace{0.05in}

\noindent is a morphism of affine group schemes is equivalent to saying that the composite of maps:

\vspace{0.05in}
\begin{center}$k[U] \xrightarrow{\Delta} k[U] \otimes k[U] \xrightarrow{\varphi^{*}_Y \otimes \phi^{*}_Y} k[t] \otimes k[t]$\end{center}
\vspace{0.05in}

\noindent is a morphism of Hopf algebras (and we have the analogous statement in the $\mathbb{G}_{a(r)}$ case).  Now this composite is a morphism of $k$-algebras (for both maps in the chain are), so we just must verify that it is a map of coalgebras.  Note that the coalgebra structure on $k[t] \otimes k[t]$ is given by 

\vspace{0.05in}
\begin{center}$k[t] \otimes k[t] \xrightarrow{\Delta \otimes \Delta} k[t] \otimes k[t] \otimes k[t] \otimes k[t] \xrightarrow{id. \otimes \tau \otimes id.} k[t] \otimes k[t] \otimes k[t] \otimes k[t]$\end{center}
\vspace{0.05in}

\noindent where $\tau$ is the twist map sending a simple tensor $a \otimes b$ to $b \otimes a$ (to see that this is the correct structure, one can write the diagram for the multiplicative structure placed on the tensor product of two $k$-algebras, and then take the ``dual" diagram).  Using arguments similar to those appearing earlier, we see that it suffices to check this map on the elements $Y_{\alpha}$ (because, as observed above, $(\varphi^{*}_Y \otimes \phi^{*}_Y) \circ \Delta$ is a $k$-algebra map).  It then follows that if

\vspace{0.05in}
\begin{center}$k[U] \xrightarrow{\Delta} k[U] \otimes k[U] \xrightarrow{\varphi^{*} \otimes \phi^{*}} k[t]/(t^{p^r}) \otimes k[t]/(t^{p^r})$\end{center}
\vspace{0.05in}

\noindent respects the comultiplication on the elements in $k[U]_{<p}$, then so will $(\varphi^{*}_Y \otimes \phi^{*}_Y) \circ \Delta$.  This shows that $[\varphi, \varphi^{\prime}] = 0$ implies $[\textup{exp}^r_U(\varphi), \textup{exp}^r_U(\varphi^{\prime})] = 0$.

(3) The group $P(k)$ is generated by $T(k)$ and the root subgroups $U_{\alpha}(k)$ such that $\alpha \in \Phi^+ \cup \Phi_J$.  By the definition of a weight space, the action of $T(k)$ on $k[U]$ stabilizes $k[U]_{\lambda}$ for each $\lambda$, thus it stabilizes $k[U]_{<p} \subseteq k[U]$.  On the other hand, the action of any $x \in U_{\alpha}(k)$ sends $k[U]_{\lambda}$ into $\sum k[U]_{\lambda + c\alpha}$  $(c \in \mathbb{Z}^+)$ \cite[27.2]{H}.  Since $ht_J(\alpha) \ge 0$ for all $\alpha \in \Phi^+ \cup \Phi_J$, and since $ht_J(\lambda) \le 0$, it follows that for any $c$ such that $k[U]_{\lambda + c\alpha} \ne \{0\}$ we must have $|ht_J(\lambda + c\alpha)| \le |ht_J(\lambda)|$.  Thus the action of $x$ also stabilizes $k[U]_{<p}$, hence all of $P(k)$ does.

Now let $x$ be any element in $P(k)$, let $x^{*}: k[U] \rightarrow k[U]$ be the comorphism of the morphism $U(k) \rightarrow U(k), u \mapsto x^{-1}ux$.  Let $\varphi \in \textup{Hom}_{gs/k}(\mathbb{G}_{a(r)},U)$.  We have a composite of morphisms:

\vspace{0.05in}
\begin{center}$k[U] \xrightarrow{x^{*}} k[U] \xrightarrow{\varphi^{*}} k[t]/(t^{p^r})$\end{center}
\vspace{0.05in}

We must now show that $(\varphi^{*} \circ x^{*})_Y$ and $\varphi^{*}_Y \circ x^{*}$ produce the same comorphism $k[U] \rightarrow k[t]$.  However, we note that if $b \in P(k)$, then $x^*(f)(b) = f(x^{-1}bx)$, which is to say that $x^*(f) = x.f$, and by the previous discussion this implies that $x^*(k[U]_{<p}) = k[U]_{<p}$.  Applying Lemma \ref{changes}, we have that $(\varphi^{*} \circ x^{*})_Y$ and $\varphi^{*}_Y \circ x^{*}$ must agree on $k[U]_{<p}$.  As $k[U]$ is generated as a $k$-algebra by $k[U]_{<p}$, these maps then agree on $k[U]$.

(4) First, we prove that the definition of $\text{exp}^r_U$ is not dependent on the choice $B$ (the case for $T$ is similar).  Suppose that $B^{\prime}$ is another Borel subgroup of $P$.  Then there is some $x \in P(k)$ such that $B^{\prime}(k) = x^{-1}B(k)x$.  However, as follows from the proof of (3), the root subgroups of $P$ with respect to $x^{-1}T(k)x$ will lead to a different choice of $k$-algebra generators for $k[U]$, all of which lie in the set $k[U]_{<p}$.  This is to say, the subspace $k[U]_{<p}$ is independent of the choice of $B$ or $T$, and thus so is $\text{exp}^r_U$.  We see moreover that under any automorphism, $k[U]_{<p}$ must be sent to itself, thus the arguments of (3) apply to any automorphism of $P$.

\end{proof}

\bigskip

We end this section by relating our results when $r=1$ to the to the work found in \cite[5.2]{Sei}.  To do this, we must first relate the two notions of exponentiation.  Denote by $\mathfrak{g}_a$ the Lie algebra of $\mathbb{G}_a$.  It is one dimensional, spanned by the element $(d/dt)^{(1)}$, which is defined according to

\vspace{0.05in}
\begin{center}$(d/dt)^{(1)}(t) = 1, \quad (d/dt)^{(1)}(t^i) = 0 \text{ if } i > 1$\end{center}
\vspace{0.05in}

Recall that $\mathfrak{u}$ is the Lie algebra of $U$, and suppose that $x \in \mathfrak{u}$.  With the assumption that the nilpotence class of $U$ is less than $p$, we have that $x^{[p]} = 0$.  The map sending $(d/dt)^{(1)}$ to $x$ defines a morphism of restricted Lie algebras, $\mathfrak{g}_a \rightarrow \mathfrak{u}$, which in turn produces a morphism $\varphi_x: \mathbb{G}_{a(1)} \rightarrow U_{(1)} \subseteq U$.  Let $x^n$ denote the $n$-th power of $x$ as an element in the distribution algebra $Dist(U)$ (see \cite[I.7]{J1}).  One can check that the comorphism of $\varphi_x$ is given by

\begin{equation}\label{image}
Y_{\alpha} \mapsto \sum_{n=0}^{p-1} \frac{x^n(Y_{\alpha})}{n!}t^n
\end{equation}

\noindent We have then that the comorphism of $\textup{exp}^1_{\varphi_x}: \mathbb{G}_a \rightarrow U$ sends $Y_{\alpha}$ to the same expression as in (\ref{image}), only this time as a polynomial in $k[t]$.

On the other hand, consider the map $\text{exp}:\mathfrak{u} \rightarrow U(k)$ defined in \cite[5.2]{Sei}, and let $x$ be the same element in $\mathfrak{u}$ as above.  We can define a one-parameter subgroup $\text{exp}_x: \mathbb{G}_a(k) \rightarrow U(k)$, which sends $a \in \mathbb{G}_a(k)$ to $\text{exp}(ax)$.  By \cite[5.2(i)]{Sei} this map will have tangent map $(d/dt)^{(1)} \mapsto x$, and is therefore another extension of $\varphi_x: \mathbb{G}_{a(1)} \rightarrow U$.  We claim that the morphisms $\textup{exp}^1_{\varphi_x}$ and $\text{exp}_x$ agree.  As they restrict to the same morphisms over $\mathbb{G}_{a(1)}$, and given (\ref{image}), this will be true if the comorphism of $\text{exp}_x$ sends each $Y_{\alpha}$ to a polynomial of degree $< p$ in $k[t]$.  But this must be true, as can be seen in the argument that one can obtain the exponential over $k$ by base change from the ring $\mathbb{Z}[\frac{1}{(p-1)!}]$ (see \cite[2.2, Proposition 1]{Ser} and \cite[\S 8]{M}).

\section{Extension to $G$-orbits}

If $H$ is a subgroup scheme of $G$, we can view $\textup{Hom}_{gs/k}(\mathbb{G}_{a(r)},H)$ as a subset of $\textup{Hom}_{gs/k}(\mathbb{G}_{a(r)},G)$ via the inclusion $H \hookrightarrow G$.  The left-action of conjugation by $G$ induces a left-action of $G(k)$ on $\textup{Hom}_{gs/k}(\mathbb{G}_{a(r)},G)$, and we write $g \cdot \varphi$ for the image of $g \in G(k)$ acting on $\varphi \in \textup{Hom}_{gs/k}(\mathbb{G}_{a(r)},G)$.  We also have an action of $G(k)$ on $\textup{Hom}_{gs/k}(\mathbb{G}_a,G)$, and will denote this action in the same way.

Now let $P, P^{\prime}$ be conjugate parabolic subgroups of $G$, with $P^{\prime}(k) = gP(k)g^{-1}$ for some $g \in G(k)$.  Denote by $U$ and $U^{\prime}$ the unipotent radicals of $P$ and $P^{\prime}$ respectively.  We see that for $\varphi \in \text{Hom}_{gs/k}(\mathbb{G}_{a(r)},U)$ and $\phi \in \text{Hom}_{gs/k}(\mathbb{G}_{a},U)$, the maps sending $\varphi \mapsto g \cdot \varphi$, and $\phi \mapsto g \cdot \phi$ define bijections

\vspace{0.05in}
\begin{center} $\text{Hom}_{gs/k}(\mathbb{G}_{a(r)},U) \xrightarrow{\sim} \text{Hom}_{gs/k}(\mathbb{G}_{a(r)},U^{\prime})$ \end{center}
\vspace{0.07in}

\begin{center} $\text{Hom}_{gs/k}(\mathbb{G}_{a},U)  \xrightarrow{\sim} \text{Hom}_{gs/k}(\mathbb{G}_{a},U^{\prime})$ \end{center}
\vspace{0.05in}

Suppose now that $U$ (and therefore $U^{\prime}$) has nilpotency less than $p$.  In the isomorphism between $U$ and $U^{\prime}$ (given by conjugating by $g$), it follows from the proof of Theorem \ref{exp}(4) that the isomorphism of coordinate algebras $k[U^{\prime}] \xrightarrow{\sim} k[U]$ must send $k[U^{\prime}]_{<p} \xrightarrow{\sim} k[U]_{<p}$ (as subspaces, neither being closed under multiplication).  This implies that for every $\varphi \in \text{Hom}_{gs/k}(\mathbb{G}_{a(r)},U)$, we have

\begin{equation}\label{respects}
\text{exp}^r_{U^{\prime}}(g \cdot \varphi) = g \cdot \text{exp}^r_{U}(\varphi)
\end{equation}

\bigskip
In view of this, we have almost established that the exponentiation results for $\text{Hom}_{gs/k}(\mathbb{G}_{a(r)},U)$ extend to $G \cdot \text{Hom}_{gs/k}(\mathbb{G}_{a(r)},U)$.  It remains, however, to prove that if both $\varphi$ and $g \cdot \varphi$ factor through $U$, then $\text{exp}^r_{U^{\prime}}(g \cdot \varphi) = \text{exp}^r_{U}(g \cdot \varphi)$, which by (\ref{respects}) is equivalent to proving that $\text{exp}^r_{U}(g \cdot \varphi) = g \cdot \text{exp}^r_{U}(\varphi)$.

\bigskip
\begin{thm}\label{expglobal}
Let $G$ be a reductive group, $P$ a parabolic subgroup of $G$ with unipotent radical $U$, and assume that $p$ is greater than the nilpotence class of $U$.  Then for each $r \ge 1$, the map $\text{exp}^r_U$ extends uniquely to a map

\vspace{0.05in}
\begin{center}$\textup{exp}^r: G \cdot \textup{Hom}_{gs/k}(\mathbb{G}_{a(r)},U) \rightarrow G \cdot \textup{Hom}_{gs/k}(\mathbb{G}_a,U)$\end{center}
\vspace{0.05in}

\noindent where $\textup{exp}^r(g \cdot \varphi) := g \cdot \textup{exp}^r_{U}(\varphi)$ for all $g \in G(k)$, and $\varphi \in \textup{Hom}_{gs/k}(\mathbb{G}_{a(r)},U)$.

\end{thm}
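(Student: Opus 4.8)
The plan is to reduce the whole statement to the single assertion, flagged in the paragraph before the theorem, that whenever $\varphi$ and $g\cdot\varphi$ both factor through $U$ we have $\textup{exp}^r_U(g\cdot\varphi)=g\cdot\textup{exp}^r_U(\varphi)$. Granting this, the map $\textup{exp}^r(g\cdot\varphi):=g\cdot\textup{exp}^r_U(\varphi)$ is well defined: if $g\cdot\varphi=g'\cdot\varphi'$ with $\varphi,\varphi'$ both factoring through $U$, then $\varphi'=(g'^{-1}g)\cdot\varphi$ also factors through $U$, so by the key assertion $g'^{-1}g\cdot\textup{exp}^r_U(\varphi)=\textup{exp}^r_U((g'^{-1}g)\cdot\varphi)=\textup{exp}^r_U(\varphi')$, and applying $g'$ gives $g\cdot\textup{exp}^r_U(\varphi)=g'\cdot\textup{exp}^r_U(\varphi')$. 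Uniqueness of the extension is forced, since any extension must agree with $\textup{exp}^r_U$ on $\textup{Hom}_{gs/k}(\mathbb{G}_{a(r)},U)$ and must be $G(k)$-equivariant by the very definition of the domain as a single $G(k)$-orbit set; that the image lands in $G\cdot\textup{Hom}_{gs/k}(\mathbb{G}_a,U)$ is immediate from the formula.

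So the real content is the key assertion. First I would pass to the parabolic $P'=gPg^{-1}$ with unipotent radical $U'=gUg^{-1}$; conjugation by $g$ gives an isomorphism $U\xrightarrow{\sim}U'$ and a corresponding isomorphism of Hopf algebras $k[U']\xrightarrow{\sim}k[U]$. Since conjugation by $g$ carries the $T$-grading datum on $k[U]$ to the $(gTg^{-1})$-grading datum on $k[U']$, and since by the proof of Theorem \ref{exp}(4) the subspace $k[U]_{<p}$ is intrinsic — it does not depend on the choice of maximal torus or Borel inside the parabolic — this isomorphism carries $k[U']_{<p}$ onto $k[U]_{<p}$. From Lemma \ref{changes} (any set of free algebra generators lying in the $(<p)$-subspace yields the same lift) I get relation (\ref{respects}): $\textup{exp}^r_{U'}(g\cdot\varphi)=g\cdot\textup{exp}^r_U(\varphi)$. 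Thus the key assertion is equivalent to showing $\textup{exp}^r_U(g\cdot\varphi)=\textup{exp}^r_{U'}(g\cdot\varphi)$, i.e. that when $\psi:=g\cdot\varphi$ happens to factor through both $U$ and $U'$, the two a priori different exponentiation recipes produce the same one-parameter subgroup.

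To settle that, I would exploit intrinsicness more forcefully. The morphism $\psi$ factors through the scheme-theoretic intersection $U\cap U'$ (or at least through a common unipotent subgroup containing its image), and the point is that the lift $\psi^*_Y$ is characterized without reference to the ambient parabolic: by Lemma \ref{changes} and Lemma \ref{degree}, $\textup{exp}^r_U(\psi)$ is the unique morphism $\mathbb{G}_a\to U$ restricting to $\psi$ on $\mathbb{G}_{a(r)}$ whose comorphism sends a (equivalently, every) set of $T$-weight-homogeneous free algebra generators of $k[U]$ into the span of $1,t,\dots,t^{p^r-1}$ — and the degree bound in Lemma \ref{degree} shows this span condition is automatic on $k[U]_{<p}$. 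Concretely, I would choose a Borel $B''$ of $P$ and a torus $T''$ such that $B''$ is also contained in $P'$ (possible: take $T''$ a maximal torus of $P\cap P'$ and $B''\supseteq T''$ inside $P$, then adjust so the root data align — alternatively argue via a common parabolic containing a conjugate). Then a common set of root-coordinate generators computes \emph{both} $\textup{exp}^r_U$ and $\textup{exp}^r_{U'}$ on the relevant image, giving the identification on $k[U]$ restricted along $\psi^*$, hence $\textup{exp}^r_U(\psi)=\textup{exp}^r_{U'}(\psi)$ as morphisms to $G$. The main obstacle I anticipate is precisely this last comparison: making rigorous the claim that $\textup{exp}^r_U(\psi)$ depends only on $\psi$ and the abstract pair $(\mathbb{G}_a,G)$ — not on the chosen parabolic envelope $U$ — will require carefully tracking weight spaces under two possibly non-comparable parabolics and verifying that the degree-$<p^r$ lifting condition transfers. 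Everything else — well-definedness, uniqueness, and the target of the map — is then bookkeeping built on (\ref{respects}) and Theorem \ref{exp}.
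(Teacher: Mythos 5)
Your reduction of the theorem to the single assertion that $\textup{exp}^r_U(g\cdot\varphi)=g\cdot\textup{exp}^r_U(\varphi)$ whenever $\varphi$ and $g\cdot\varphi$ both factor through $U$, together with the use of (\ref{respects}) to recast this as $\textup{exp}^r_U(\psi)=\textup{exp}^r_{U'}(\psi)$ for $\psi$ factoring through both unipotent radicals, is exactly the structure of the paper's argument, and your treatment of well-definedness and uniqueness is fine. The gap is in the one step you yourself flag as the main obstacle: you never actually prove the comparison, and the specific device you propose for it --- a Borel subgroup $B''$ contained in both $P$ and $P'$ --- does not exist in general (two parabolics need not share a Borel; opposite Borels already intersect only in a maximal torus). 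What always exists is a common maximal torus, and the paper arranges to exploit this via the Bruhat decomposition rather than by comparing two unrelated parabolic envelopes directly.

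Concretely: write $g=b_1nb_2$ with $b_i\in B(k)$ and $n\in N(T(k))$. Since $b_1,b_2$ normalize $U(k)$, Theorem \ref{exp}(3) absorbs them, reducing the key assertion to the case $g=n\in N(T(k))$. Now $U'=nUn^{-1}$ is generated by the $T$-root subgroups for $w(\Phi^+\backslash\Phi^+_J)$, and $U\cap U'$ by those for $(\Phi^+\backslash\Phi^+_J)\cap w(\Phi^+\backslash\Phi^+_J)$; ordering the roots so that these common roots come first, the inclusion comorphisms $k[U]\to k[U\cap U']$ and $k[U']\to k[U\cap U']$ become the coordinate projections $X_i\mapsto Z_i$ (or $0$ for $i>m$) and $Y_i\mapsto Z_i$ (or $0$ for $i>m$). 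Because $\psi=n\cdot\varphi$ factors through $U\cap U'$, its comorphism kills the generators with $i>m$, hence so does the degree-$<p^r$ lift, and both $\textup{exp}^r_U(\psi)$ and $\textup{exp}^r_{U'}(\psi)$ are computed by the identical lifting recipe on $k[Z_1,\ldots,Z_m]$; they therefore agree, with Theorem \ref{exp}(4) guaranteeing that neither map depends on these choices of Borel, torus, ordering, or root homomorphisms. This is the piece of ``intrinsicness'' you were reaching for, but it is obtained by putting both unipotent radicals in root coordinates for the \emph{same} maximal torus, not by finding a common Borel.
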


\begin{proof}
Let $r \ge 1$, and fix $B$ to be some Borel subgroup of $P$, and choose $T$ to be a maximal torus of $B$.  Again let $\Phi$ be the root system of $G$ with respect to $T$, and choose simple roots so that the root subgroups lying in $B$ come from $\Phi^+$.  Let $n \in G(k)$ be in the normalizer of $T(k)$, and set $P^{\prime}$ to be the parabolic subgroup scheme corresponding $nP(k)n^{-1}$.  Suppose that $\varphi \in \textup{Hom}_{gs/k}(\mathbb{G}_{a(r)},U)$ is such that $n \cdot \varphi$ also factors through $U$.  We have then that $n \cdot \varphi$ factors through $U \cap U^{\prime}$, the intersection of the unipotent radicals of $P$ and $P^{\prime}$.  The group $U \cap U^{\prime}$ is equal to the subgroup of $U$ generated by those $U_{\alpha}$ such that

\vspace{0.05in}
\begin{center} $\alpha \in (\Phi^+\backslash \Phi^+_J) \cap w(\Phi^+\backslash \Phi^+_J)$, \end{center}
\vspace{0.05in}

\noindent where $w$ is the element in the Weyl group of $\Phi$ represented by $n$ (this is observed for $P=B$ in \cite[I.1.3(b)]{SS}).  Let $m = |(\Phi^+\backslash \Phi^+_J) \cap w(\Phi^+\backslash \Phi^+_J)|$.  We can choose total orderings on the roots in $\Phi^+\backslash \Phi^+_J$ and $w(\Phi^+\backslash \Phi^+_J)$ and choose our root homomorphisms in such a way that

\vspace{0.05in}
\begin{center} $k[U] = k[X_1,\ldots,X_m,\ldots,X_n], \quad k[U^{\prime}] = k[Y_1,\ldots,Y_m,\ldots,Y_n]$, \end{center}
\vspace{0.05in}
\begin{center} $k[U \cap U^{\prime}] = k[Z_1,\ldots,Z_m]$, \end{center}
\vspace{0.07in}

\noindent and the inclusions of $U \cap U^{\prime}$ into $U$ and $U^{\prime}$ have comorphisms

\vspace{0.05in}
\begin{center} $i_1^{*}: k[X_1,\ldots,X_m,\ldots,X_n] \rightarrow k[Z_1,\ldots,Z_m], \quad X_i \mapsto Z_i$ (or $0$ if $i > m$), \end{center}
\vspace{0.05in}
\begin{center} $i_2^{*}: k[Y_1,\ldots,Y_m,\ldots,Y_n] \rightarrow k[Z_1,\ldots,Z_m], \quad Y_i \mapsto Z_i$ (or $0$ if $i > m$). \end{center}
\vspace{0.05in}

We see then that the lifting of the comorphism $k[U] \rightarrow k[t]/(t^{p^r})$ to $k[t]$ will factor through the projection $i_1^{*}$, and similarly for $k[U^{\prime}]$.  Thus the morphisms $\text{exp}^r_U(n \cdot \varphi)$ and $\text{exp}^r_{U^{\prime}}(n \cdot \varphi)$ both factor through $U \cap U^{\prime}$ and necessarily agree by the construction above, noting that $\text{exp}^r_U$ and $\text{exp}^r_{U^{\prime}}$ are independent of our choices of $B$ and $T$ (and hence $B^{\prime}$ and $T^{\prime}$ in $P^{\prime}$), and of our choices of root orderings for $U$ and $U^{\prime}$.  By (\ref{respects}) we have that

\vspace{0.05in}
\begin{center}$\text{exp}^r_{U}(n \cdot \varphi) = \text{exp}^r_{U^{\prime}}(n \cdot \varphi) = n \cdot \text{exp}^r_{U}(\varphi)$,\end{center}
\vspace{0.05in}

Now let $g \in G(k)$ be arbitrary.  The Bruhat decomposition of $G$ allows us to write $g = b_1nb_2$, with $n \in N(T(k)), b_i \in B(k)$.  Suppose that $\varphi$ and $g \cdot \varphi$ both factor through $U$.  Then $nb_2 \cdot \varphi$ and $b_2 \cdot \varphi$ also factor through $U$, as $b_1, b_2$ normalize $U(k)$.  By the previous arguments, along with Theorem \ref{exp}(3), we get

\begin{eqnarray*}
\text{exp}^r_U(g \cdot \varphi) & = & b_1 \cdot \text{exp}^r_U(nb_2 \cdot \varphi) = b_1n \cdot \text{exp}^r_U(b_2 \cdot \varphi)\\
& = & b_1nb_2 \cdot \text{exp}^r_U(\varphi) = g \cdot \text{exp}^r_U(\varphi)\\
\end{eqnarray*}

\bigskip
By (\ref{respects}), this proves that the map $\text{exp}^r$, given by setting

\vspace{0.05in}
\begin{center}$\text{exp}^r(g \cdot \varphi) := g \cdot \text{exp}^r_{U}(\varphi)$,\end{center}
\vspace{0.05in}

\noindent for all $g \in G(k)$, and $\varphi \in \textup{Hom}_{gs/k}(\mathbb{G}_{a(r)},U)$, is well-defined and compatible with the conjugation action of $G(k)$.
\end{proof}

\begin{remark}\label{goodone}
Suppose that $G$ is semisimple and that $p$ is good for $G$.  By the work of Nakano, Parshall, and Vella in \cite{NPV}, there exists some $P$ such that $U$ has nilpotence class less than $p$, and $\mathcal{N}_1(\mathfrak{g}) = G \cdot \mathfrak{u}$.  If we further assume that $p$ does not divide the order of the fundamental group of $G$, and that $\mathcal{N}_1(\mathfrak{g})$ is a normal variety, then it is shown in \cite{CLN} that the exponential map defined on $\mathfrak{u}$ extends to a $G$-equivariant isomorphism of varieties between $\mathcal{N}_1(\mathfrak{g})$ and $\mathcal{U}_1(G)$.  However, our theorem above establishes that even if these additional hypotheses are not satisfied, the exponential on $\mathfrak{u}$ at least extends to a $G$-equivariant map from $\mathcal{N}_1(\mathfrak{g})$ to $\mathcal{U}_1(G)$.
\end{remark}

\section{Infinitesimal One-parameter subgroups}

We now apply the preceding results to the problem of describing the infinitesimal one-parameter subgroups of a reductive group $G$.  As we have done previously, we will first establish the relevant result for the unipotent radical of a parabolic subgroup, and then proceed to dealing with $G$.  If $U$ is as in Theorem \ref{exp}, then for all $x \in \mathfrak{u}$, we continue to denote by $\text{exp}_x$ the one-parameter subgroup of $G$, given by sending $s \in \mathbb{G}_a(k)$ to $\text{exp}(sx) \in U(k) \subseteq G(k)$, and likewise for the extension of the exponential to $G \cdot \mathfrak{u}$ (the existence of which is noted in Remark \ref{goodone}).

\bigskip
The proof of this first theorem builds off of the work of G. McNinch in \cite[Theorem 9.6]{M}, who first applied the results in \cite[Proposition 5.4]{Sei} towards describing infinitesimal one-parameter subgroups.

\begin{thm}\label{description}
Let $G$ be a reductive group, and assume that $P$ is a parabolic subgroup of $G$ with unipotent radical $U$ having nilpotence class less than $p$.  Then there is a natural identification between $\textup{Hom}_{gs/k}(\mathbb{G}_{a(r)},U)$ and the variety

\vspace{0.06in}
\begin{center}$C_r(\mathfrak{u}): = \{(x_0,x_1,\ldots,x_{r-1}) \mid x_i \in \mathfrak{u}, [x_i,x_j] =0\}$.\end{center}
\vspace{0.06in}

\noindent This map sends the $r$-tuple $(x_0,x_1,\ldots,x_{r-1})$ to the morphism

\vspace{0.06in}
\begin{center}$\textup{exp}_{x_0}\textup{exp}_{x_1}^{(1)} \cdots \textup{exp}_{x_{r-1}}^{(r-1)}$,\end{center}
\vspace{0.06in}

\noindent where for $a \in \mathbb{G}_a(k)$, $\textup{exp}_{x_i}^{(i)}(a) = \textup{exp}_{x_i}(a^{p^i})$. 

\end{thm}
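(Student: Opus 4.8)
The plan is to deduce the statement from a classification of the one-parameter subgroups of $U$, combined with the extension map $\textup{exp}^r_U$ of Theorem \ref{exp} and the exponential isomorphism $\textup{exp}\colon\mathfrak u\xrightarrow{\sim}U$ of \cite[5.2]{Sei}, where $\mathfrak u$ carries the group law given by the Hausdorff formula. First I would check that the prescription in the statement is well defined: for a commuting tuple $(x_0,\dots,x_{r-1})\in C_r(\mathfrak u)$ the $x_i$ span an abelian subalgebra, on which (since the nilpotence class is $<p$) the Hausdorff product is ordinary addition, so $\textup{exp}(ax_i)\textup{exp}(bx_j)=\textup{exp}(ax_i+bx_j)$ for all $i,j$. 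Hence
$$\textup{exp}_{x_0}(a)\,\textup{exp}_{x_1}^{(1)}(a)\cdots\textup{exp}_{x_{r-1}}^{(r-1)}(a)=\textup{exp}\Big(\textstyle\sum_{i=0}^{r-1}a^{p^i}x_i\Big),$$
which is $\textup{exp}$ composed with the homomorphism $a\mapsto\sum a^{p^i}x_i$ of $\mathbb G_a$ into that (vector) subgroup; this is a genuine one-parameter subgroup of $U$, and its restriction to $\mathbb G_{a(r)}$ is the morphism named in the theorem. Write $\Theta\colon C_r(\mathfrak u)\to\textup{Hom}_{gs/k}(\mathbb G_{a(r)},U)$ for the map so obtained.

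Injectivity of $\Theta$ is immediate: if $\Theta(x_0,\dots,x_{r-1})=\Theta(x_0',\dots,x_{r-1}')$, composing with $\textup{exp}^{-1}$ gives $\sum_{i<r}t^{p^i}x_i\equiv\sum_{i<r}t^{p^i}x_i'$ in $\mathfrak u\otimes k[t]/(t^{p^r})$, and $t,t^p,\dots,t^{p^{r-1}}$ are linearly independent modulo $t^{p^r}$. For surjectivity I would pass to $\mathbb G_a$: given $\varphi\in\textup{Hom}_{gs/k}(\mathbb G_{a(r)},U)$ put $\psi:=\textup{exp}^r_U(\varphi)$, a one-parameter subgroup restricting to $\varphi$ by Theorem \ref{exp}(1). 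It then suffices to show that \emph{every} $\psi\in\textup{Hom}_{gs/k}(\mathbb G_a,U)$ has the form $\textup{exp}_{x_0}\textup{exp}_{x_1}^{(1)}\cdots\textup{exp}_{x_{m-1}}^{(m-1)}$ with the $x_i\in\mathfrak u$ pairwise commuting — equivalently, that the homomorphism $g:=\textup{exp}^{-1}\circ\psi\colon\mathbb G_a\to\mathfrak u$ is of the form $g(t)=\sum_{i\ge0}t^{p^i}x_i$ with all $[x_i,x_j]=0$. Granting this, reduction mod $t^{p^r}$ kills the terms with $i\ge r$, so $\varphi=\psi|_{\mathbb G_{a(r)}}=\Theta(x_0,\dots,x_{r-1})$, and $\Theta$ is a bijection. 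It is an isomorphism of varieties: $\Theta$ is a morphism because $\textup{exp}$ is a polynomial map, and its inverse is a morphism because the $x_i$ are recovered from the coefficients of the comorphism $\varphi^{*}_Y$ of $\textup{exp}^r_U(\varphi)$ by applying the fixed linear isomorphism $\textup{exp}^{*}\colon k[U]\xrightarrow{\sim}k[\mathfrak u]$ and reading off a coefficient, both operations depending polynomially on $\varphi$.

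The classification of one-parameter subgroups of $U$ invoked above is where the real content lies, and I expect it to be the main obstacle; it is a one-parameter-subgroup version of \cite[Proposition 5.4]{Sei}, in the spirit of \cite[Theorem 9.6]{M}. I would prove it by induction on the nilpotence class $c$ of $\mathfrak u$. The case $c=1$ (which includes every situation with $p=2$) is trivial, since $\mathfrak u$ is then abelian, $\textup{exp}$ is the identity on the vector group $\mathfrak u$, and $g$ is simply an additive polynomial map. For $c>1$ (so $p\ge3$), let $\mathfrak z$ be the last nonzero term of the lower central series of $\mathfrak u$, a central ideal; $\mathfrak u/\mathfrak z$ has strictly smaller class, so by induction the induced homomorphism $\mathbb G_a\to\mathfrak u/\mathfrak z$ is $\sum_i t^{p^i}\bar y_i$ with the $\bar y_i$ commuting. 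Lift to $y_i\in\mathfrak u$; then $[y_i,y_j]\in\mathfrak z$, and because $\mathfrak z$ is central the values $g(s),g(t)$ lie in a subalgebra of class $\le2$, so expanding the identity $g(s+t)=g(s)\ast g(t)$ by the Hausdorff formula (only one bracket term survives) and writing $g(t)=\sum_i t^{p^i}y_i+w(t)$ with $w$ valued in $\mathfrak z$ gives
$$w(s+t)=w(s)+w(t)+\tfrac12\textstyle\sum_{i,j}s^{p^i}t^{p^j}[y_i,y_j].$$
Comparing the coefficients of $s^{p^i}t^{p^j}$ and of $s^{p^j}t^{p^i}$ for $i\ne j$ — both binomial coefficients $\binom{p^i+p^j}{p^i}$ and $\binom{p^i+p^j}{p^j}$ equal $1$ in $k$ by Lucas's theorem — forces $[y_i,y_j]=-[y_i,y_j]$, hence $[y_i,y_j]=0$. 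Then $w$ is itself an additive polynomial valued in $\mathfrak z$, and absorbing its coefficients into the $y_i$ produces the required commuting tuple $(x_i)$ with $g(t)=\sum_i t^{p^i}x_i$. The point that makes this work — the homomorphism property is incompatible with any nonzero bracket among the leading coefficients — is the heart of the argument.
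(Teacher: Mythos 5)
Your proposal is correct and follows essentially the same route as the paper: both rest on the classification of genuine one-parameter subgroups of $U$ as products $\textup{exp}_{x_0}\textup{exp}_{x_1}^{(1)}\cdots$ over commuting tuples in $\mathfrak{u}$, obtain injectivity from the linear independence of $t,t^p,\dots,t^{p^{r-1}}$ modulo $t^{p^r}$, and obtain surjectivity by lifting $\varphi$ to the one-parameter subgroup $\textup{exp}^r_U(\varphi)$ via Theorem \ref{exp}(1) and then truncating the tail $x_r,x_{r+1},\dots$. The only difference is that the paper outsources the classification to \cite[5.4]{Sei} and the injectivity to \cite[Theorem 9.6]{M}, whereas you re-derive the former by induction on the nilpotence class using the truncated Hausdorff formula and a Lucas-theorem coefficient comparison; that argument is sound but replaces a citation rather than changing the strategy.
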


\begin{proof}

In \cite[5.4]{Sei}, it was shown that the set $\text{Hom}_{gs/k}(\mathbb{G}_a,U)$ can be identified with the set 

\vspace{0.06in}
\begin{center}$\{(x_0,x_1,\ldots) \mid x_i \in \mathfrak{u}, [x_i,x_j] =0, \text{ and $x_i = 0$ for $i$ large}\}$,\end{center}
\vspace{0.06in}

\noindent where $(x_0,x_1,\ldots)$ is identified with the morphism $\textup{exp}_{x_0}\textup{exp}_{x_1}^{(1)} \cdots$, the latter well-defined because of the condition that $x_i = 0$ for $i >> 0$.  As shown in \cite[Theorem 9.6]{M} (which in turn follows the work of \cite[Lemma 1.7]{SFB1}), restricting these one-parameter subgroups to $\mathbb{G}_{a(r)}$ defines an injective map $C_r(\mathfrak{u}) \rightarrow \textup{Hom}_{gs/k}(\mathbb{G}_{a(r)},U)$.  On the other hand, by Theorem \ref{exp} the restriction

\vspace{0.06in}
\begin{center}$res : \textup{Hom}_{gs/k}(\mathbb{G}_{a},U) \rightarrow \textup{Hom}_{gs/k}(\mathbb{G}_{a(r)},U)$\end{center}
\vspace{0.06in}

\noindent is surjective.  Clearly the morphisms corresponding to the elements

\vspace{0.06in}
\begin{center}$(x_0,x_1,\ldots, x_{r-1}, x_r, x_{r+1}, \ldots)$ \; and \; $(x_0,x_1,\ldots,x_{r-1},0,0,0, \ldots)$\end{center}
\vspace{0.06in}

\noindent restrict to the same element in $\textup{Hom}_{gs/k}(\mathbb{G}_{a(r)},U)$, thus the map

$$C_r(\mathfrak{u}) \rightarrow \textup{Hom}_{gs/k}(\mathbb{G}_{a(r)},U)$$

\noindent is also surjective.
\end{proof}

To extend this result to $G$, we need to know that there is a $P$ and $U$ as in the previous theorem with the property that every morphism from $\mathbb{G}_{a(r)}$ to $G$ factors through a conjugate of $U$.  For $r=1$ and $p$ good this fact follows from the work of \cite{NPV}.  For $r>1$, we need a result in \cite{LMT} due to Levy, McNinch, and Testerman, which establishes in particular that an embedding of $\mathbb{G}_{a(r)}$ must factor through a Borel subgroup of $G$.  Their result assumes that $p$ is not a \textit{torsion prime} for $G$ (see \cite[\S 2]{LMT}).  On the other hand, for the unipotent radical of a Borel subgroup to satisfy our nilpotency condition, we must assume that $p \ge h$, the Coxeter number of $G$.  This will guarantee then that $p$ is not a torsion prime, except in the case that $PSL_p$ is a simple factor of the semisimple group $[G,G]$.

\begin{thm}
Let $G$ and $P$ be as in Theorem \ref{description}, and assume further that

\vspace{0.06in}
\begin{center}$C_r(\mathcal{N}_1(\mathfrak{g})) = G \cdot C_r(\mathfrak{u}) \; \text{ and } \; \textup{Hom}_{gs/k}(\mathbb{G}_{a(r)},G) = G \cdot \textup{Hom}_{gs/k}(\mathbb{G}_{a(r)},U)$.\end{center}
\vspace{0.06in}

\noindent Then there is a natural identification between $\textup{Hom}_{gs/k}(\mathbb{G}_{a(r)},G)$ and the variety

\vspace{0.06in}
\begin{center}$C_r(\mathcal{N}_1(\mathfrak{g})): = \{(x_0,x_1,\ldots,x_{r-1}) \mid x_i \in \mathcal{N}_1(\mathfrak{g}), [x_i,x_j] =0\}$.\end{center}
\vspace{0.06in}

\noindent In particular, if $p \ge h$ and $PSL_p$ is not simple factor of $[G,G]$, then this result result holds.
\end{thm}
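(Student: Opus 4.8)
The plan is to derive the statement for $G$ from its counterpart for $U$ (Theorem~\ref{description}) by transporting along the $G$-action, using Theorem~\ref{expglobal} to guarantee that this transport is consistent, and then to verify the two displayed hypotheses in the case $p\ge h$. Write $\Psi\colon C_r(\mathfrak u)\xrightarrow{\sim}\textup{Hom}_{gs/k}(\mathbb{G}_{a(r)},U)$ for the bijection of Theorem~\ref{description}, so that $\Psi((y_0,\dots,y_{r-1}))$ is the restriction to $\mathbb{G}_{a(r)}$ of the one-parameter subgroup $\rho_{(y_i)}:=\textup{exp}_{y_0}\textup{exp}_{y_1}^{(1)}\cdots\textup{exp}_{y_{r-1}}^{(r-1)}$. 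A degree count of the type used in Lemma~\ref{degree}, carried out across the $r$ factors, shows that $\rho_{(y_i)}^{*}(Y_\alpha)$ has degree at most $|ht_J(\alpha)|\,p^{r-1}<p^{r}$, so $\rho_{(y_i)}^{*}$ is exactly the lift $\varphi^{*}_Y$ of the comorphism of $\Psi((y_i))$; that is, $\textup{exp}^r_U(\Psi((y_i)))=\rho_{(y_i)}$. By Theorem~\ref{expglobal} (cf.~Remark~\ref{goodone}) the exponential on $\mathfrak u$ extends $G$-equivariantly to $G\cdot\mathfrak u$, so whenever $h\in G(k)$ satisfies $\mathrm{Ad}(h)y_i\in\mathfrak u$ for all $i$ one has $h\cdot\rho_{(y_i)}=\rho_{(\mathrm{Ad}(h)y_i)}$ and hence $h\cdot\Psi((y_i))=\Psi((\mathrm{Ad}(h)y_i))$. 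Using the hypothesis $C_r(\mathcal{N}_1(\mathfrak g))=G\cdot C_r(\mathfrak u)$, define $\Psi_G\colon C_r(\mathcal{N}_1(\mathfrak g))\to\textup{Hom}_{gs/k}(\mathbb{G}_{a(r)},G)$ by $\Psi_G(g\cdot(y_i)):=g\cdot\Psi((y_i))$ for $g\in G(k)$ and $(y_i)\in C_r(\mathfrak u)$. If $g\cdot(y_i)=g'\cdot(y_i')$ with both tuples in $C_r(\mathfrak u)$, then $h:=(g')^{-1}g$ satisfies $\mathrm{Ad}(h)y_i=y_i'$, whence $h\cdot\Psi((y_i))=\Psi((y_i'))$ by the previous line and therefore $g\cdot\Psi((y_i))=g'\cdot\Psi((y_i'))$; thus $\Psi_G$ is well defined, $G$-equivariant, and restricts to $\Psi$ on $C_r(\mathfrak u)$.

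Next I would check that $\Psi_G$ is a bijection. Surjectivity is immediate from the hypothesis $\textup{Hom}_{gs/k}(\mathbb{G}_{a(r)},G)=G\cdot\textup{Hom}_{gs/k}(\mathbb{G}_{a(r)},U)$ combined with Theorem~\ref{description}. For injectivity, suppose $g\cdot\Psi((y_i))=g'\cdot\Psi((y_i'))$; putting $h=(g')^{-1}g$ this reads $h\cdot\Psi((y_i))=\Psi((y_i'))$, and applying $\textup{exp}^r_U$ together with Theorem~\ref{expglobal} gives $h\cdot\rho_{(y_i)}=\rho_{(y_i')}$ as morphisms $\mathbb{G}_a\to U$. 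Comparing tangent maps, and then inductively cancelling the leading exponential and comparing the successive Frobenius twists of the two sides — this is precisely the uniqueness of Seitz's parametrization of $\textup{Hom}_{gs/k}(\mathbb{G}_a,U)$ recalled in the proof of Theorem~\ref{description} — forces $\mathrm{Ad}(h)y_i=y_i'$ for all $i$, so the two points $g\cdot(y_i)$ and $g'\cdot(y_i')$ of $C_r(\mathcal{N}_1(\mathfrak g))$ coincide. Hence $\Psi_G$ is a bijection; explicitly it is the map $(x_0,\dots,x_{r-1})\mapsto\bigl(\textup{exp}_{x_0}\textup{exp}_{x_1}^{(1)}\cdots\textup{exp}_{x_{r-1}}^{(r-1)}\bigr){\mid}_{\mathbb{G}_{a(r)}}$ (the exponentials being those of the $G$-equivariant extension to $G\cdot\mathfrak u$), and since $\textup{exp}$ is a morphism on $G\cdot\mathfrak u$ (Remark~\ref{goodone}) this is a morphism of varieties; arguing as in \cite{SFB1}, the resulting continuous bijection is a homeomorphism, which gives the asserted natural identification.

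For the final assertion, let $p\ge h$ and take $P=B$ a Borel subgroup, so that the nilpotence class of $U$, the unipotent radical of $B$, equals the height of the highest root, namely $h-1<p$; thus Theorem~\ref{description} applies. For the hypothesis on $\textup{Hom}$: when $r=1$ the equality $\mathcal{N}_1(\mathfrak g)=G\cdot\mathfrak u$ is a result of \cite{NPV}; when $r>1$, the theorem of \cite{LMT} shows that every homomorphism $\mathbb{G}_{a(r)}\to G$ factors through a Borel subgroup, applicable because, under $p\ge h$ and the assumption that $PSL_p$ is not a simple factor of $[G,G]$, the prime $p$ is not a torsion prime for $G$; since $\mathbb{G}_{a(r)}$ is infinitesimal the image then lies in the unipotent radical, so $\textup{Hom}_{gs/k}(\mathbb{G}_{a(r)},G)=G\cdot\textup{Hom}_{gs/k}(\mathbb{G}_{a(r)},U)$. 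For the hypothesis on $C_r$: let $(x_0,\dots,x_{r-1})$ be a commuting tuple in $\mathcal{N}_1(\mathfrak g)=\mathcal{N}(\mathfrak g)$ and set $V=\mathrm{span}_k(x_0,\dots,x_{r-1})$. Because $p\ge h$, the exponential is a $G$-equivariant isomorphism of varieties $\mathcal{N}_1(\mathfrak g)\xrightarrow{\sim}\mathcal{U}_1(G)$ (\cite{CLN}, \cite{M2}), and since it is compatible with the additive structure on any subspace on which the bracket vanishes it restricts on $V$ to a homomorphism of algebraic groups $(V,+)\to G$; hence $\textup{exp}(V)$ is a connected commutative unipotent subgroup of $G$, so it lies in the unipotent radical $U_{B'}$ of some Borel subgroup $B'$, and therefore $V\subseteq\mathfrak u_{B'}$, i.e. $(x_i)$ is $G$-conjugate to a commuting tuple in $\mathfrak u$. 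This gives $C_r(\mathcal{N}(\mathfrak g))=G\cdot C_r(\mathfrak u)$, so both hypotheses hold and the theorem follows.

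I expect the principal obstacle to be exactly the well-definedness of $\Psi_G$ on $G$-orbits — the fact that conjugating a one-parameter subgroup of $U$ by an element of $G$ which merely happens to carry it back into $U$ leaves its canonical one-parameter lift unchanged — which is what Theorem~\ref{expglobal} is there to supply; and, within the specialization $p\ge h$, the load-bearing input is that a commuting tuple of $[p]$-nilpotent elements of $\mathfrak g$ lies in a common $\mathfrak u$ for some Borel, for which the exponential-and-Borel argument above is the crux.
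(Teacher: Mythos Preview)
Your overall architecture --- transport the bijection of Theorem~\ref{description} along the $G$-action using Theorem~\ref{expglobal} for well-definedness, then verify the two displayed hypotheses when $p\ge h$ --- matches the paper's, and your injectivity argument (lift via $\textup{exp}^r$, then peel off exponential factors through successive Frobenius twists) is a slightly more elaborate version of exactly what the paper does.

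There is, however, a genuine gap in your verification of the hypothesis $C_r(\mathcal{N}(\mathfrak g))=G\cdot C_r(\mathfrak u)$ for $p\ge h$. You assert that the global exponential ``is compatible with the additive structure on any subspace on which the bracket vanishes'', and use this to conclude that $\textup{exp}(V)$ is a subgroup. But the global map in Remark~\ref{goodone} (and in \cite{CLN}) is built by $G$-equivariantly extending the Hausdorff exponential on $\mathfrak u$; the identity $\textup{exp}(x+y)=\textup{exp}(x)\,\textup{exp}(y)$ for commuting $x,y$ is only visible once $x$ and $y$ have been simultaneously conjugated into a single $\mathfrak u$, which is precisely what you are trying to prove. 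So the argument is circular as written. (A related issue: to pass from $\textup{exp}(V)\subseteq U_{B'}$ back to $V\subseteq\mathfrak u_{B'}$ you need global injectivity of $\textup{exp}$, which in \cite{CLN} requires normality of $\mathcal N_1(\mathfrak g)$ and that $p$ not divide the order of the fundamental group; you have not checked these, though under your running hypotheses they do in fact hold.)

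The paper sidesteps this entirely by citing \cite[Lemma~9.7]{M}: for $p$ good, any finite set of pairwise commuting $p$-nilpotent elements of $\mathfrak g$ already lies in the Lie algebra of some Borel subgroup. That one citation gives $C_r(\mathcal N_1(\mathfrak g))=G\cdot C_r(\mathfrak u)$ immediately, with no appeal to the exponential at all. If you want to keep your proof self-contained, the cleanest fix is to replace your exponential-and-Borel paragraph with an invocation of that lemma.
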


\begin{proof}
The hypotheses clearly imply that the map defined by

\vspace{0.06in}
\begin{center}$(x_0,x_1,\ldots,x_{r-1}) \mapsto \textup{exp}_{x_0}\textup{exp}_{x_1}^{(1)} \cdots \textup{exp}_{x_{r-1}}^{(r-1)}$\end{center}
\vspace{0.06in}

\noindent is surjective.  To see that it is injective, we can follow the proof of \cite[Theorem 9.6]{M}.  Suppose that $\textup{exp}_{x_0}\textup{exp}_{x_1}^{(1)} \cdots \textup{exp}_{x_{r-1}}^{(r-1)} = \textup{exp}_{y_0}\textup{exp}_{y_1}^{(1)} \cdots \textup{exp}_{y_{r-1}}^{(r-1)}$.  Up to conjugation, we may assume $(x_0,x_1,\cdots x_{r-1}) \in C_r(\mathfrak{u})$, so that these maps both factor through $U$.  We must have that $x_0 = y_0$ by restricting to $\mathbb{G}_{a(1)}$.  We can then multiply each morphism by $\text{exp}_{-x_0}$, since $(x_0,x_1,\cdots x_{r-1}) \in C_r(\mathfrak{u})$, thus establishing that $\textup{exp}_{x_1}^{(1)} \cdots \textup{exp}_{x_{r-1}}^{(r-1)} = \textup{exp}_{y_1}^{(1)} \cdots \textup{exp}_{y_{r-1}}^{(r-1)}$.  With $F$ denoting the Frobenius morphism on $\mathbb{G}_{a}$, this says that 

\vspace{0.05in}
\begin{center}$(\textup{exp}_{x_1} \cdots \textup{exp}_{x_{r-1}}^{(r-2)}) \circ F = (\textup{exp}_{y_1} \cdots \textup{exp}_{y_{r-1}}^{(r-1)}) \circ F$\end{center}
\vspace{0.05in}

\noindent when restricted to $\mathbb{G}_{a(r)}$.  As $F(\mathbb{G}_{a(r)}) = \mathbb{G}_{a(r-1)}$, we can  proceed by induction to see that $x_i = y_i$ for all $i$.

\bigskip
Finally, if $p \ge h$, then we may select $U$ to be the unipotent radical of a Borel subgroup of $G$, as this will have nilpotence class less than $p$.  Since $p \ge h$ implies in particular that $p$ is good for $G$, we have by \cite[Lemma 9.7]{M} that any finite set of pairwise commuting $p$-nilpotent elements in $\mathfrak{g}$ lies in the Lie algebra of some Borel subgroup.  On the other hand, if we further assume that $PSL_p$ is not a factor of $[G,G]$, then $p$ will in particular also not be a torsion prime for $G$, thus \cite[Theorem 2.2]{LMT} implies that $\textup{Hom}_{gs/k}(\mathbb{G}_{a(r)},G) = G \cdot \textup{Hom}_{gs/k}(\mathbb{G}_{a(r)},U)$.
\end{proof}

\bigskip
\noindent \textbf{Acknowledgements:} We wish to acknowledge helpful discussions and/or correspondences with Jon Carlson, Eric Friedlander, Jim Humphreys, Zongzhu Lin, George McNinch, Brian Parshall, and Julia Pevtsova.

\vspace{.2 in}
\noindent\tiny{DEPARTMENT OF MATHEMATICS \& STATISTICS, UNIVERSITY OF MELBOURNE, PARKVILLE, VIC 3010, AUSTRALIA}\\
paul.sobaje@unimelb.edu.au\\
Phone: \text{+}61 \, 401769982

\end{document}